\newcommand{\dk}{}
\definecolor{darkred}{rgb}{.7,0,0}
\definecolor{darkgreen}{rgb}{0,0.7,0}
\definecolor{darkblue}{rgb}{0,0,0.7}
\newcommand\xqed[1]{%
  \leavevmode\unskip\penalty9999 \hbox{}\nobreak\hfill
  \quad\hbox{#1}}
\newcommand\rmkend{\xqed{$\blacksquare$}}
\title{Well-Posedness And Accuracy Of The Ensemble Kalman Filter In\\ 
Discrete And Continuous Time}
\author{D.T.B. Kelly, K.J.H. Law, A.M. Stuart}
\institute{The University of Warwick, \email{David.Kelly@Warwick.ac.uk}}
\newcommand{\naturals}{\mathbb{N}}
\newcommand{\Gammatilde}{\tilde{\Gamma}}
\newcommand{\ebar}{\bar{e}}
\newcommand{\vhat}{\widehat{v}}
\newcommand{\uhat}{\widehat{u}}
\newcommand{\mhat}{\widehat{m}}
\newcommand{\argmin}{{\rm argmin}}
\newcommand{\vto}[1]{v^{(#1)}}
\newcommand{\filtration}{\mathcal{F}}
\newcommand{\Chat}{\widehat{C}}
\newcommand{\CK}{\mathcal{K}}
\newcommand{\BP}{\mathbf{P}}
\newcommand{\BE}{\mathbf{E}}
\newcommand{\T}{\mathbb{T}}
\newcommand{\C}{\mathbb{C}}
\newcommand{\R}{\mathbb{R}}
\newcommand{\pd}{\partial}
\newcommand{\cH}{\mathcal{H}}
\newcommand{\Z}{\mathbb{Z}}
\newcommand{\Pl}{\mathcal{P}_{\lambda}}
\newcommand{\Ql}{\mathcal{Q}_{\lambda}}
\newcommand{\eto}[1]{e^{(#1)}}
\newcommand{\normone}[1]{\left| #1 \right|}
\newcommand{\normtwo}[1]{\left\| #1 \right\|}
\newcommand{\vbar}{\bar{v}}
\newcommand{\bigoh}{O}
\newcommand{\trace}{{\rm tr}}
\newcommand{\E}{\mathbf{E}}
\renewcommand{\P}{\mathbf{P}}
\newcommand{\CV}{\mathcal{V}}
\newcommand{\CL}{\mathcal{L}}
\begin{document}
\maketitle

\begin{abstract}
The ensemble Kalman filter (EnKF) is a method for combining a dynamical model
with data in a sequential fashion. Despite its widespread use, there
has been  little analysis of its theoretical properties. Many of the
algorithmic innovations associated with the filter, which are required 
to make a useable algorithm in practice, are derived in an {\em ad hoc}
fashion. The aim of this paper is to initiate the development of a
systematic analysis of the EnKF, in particular to do so in the small 
ensemble size limit. The perspective is to view the method as a state estimator, 
and not as an algorithm which approximates the true filtering distribution.
The perturbed observation version of the algorithm is studied, without and
with variance inflation. Without variance inflation well-posedness of
the filter is established; with variance inflation 
accuracy of the filter, with resepct to the true signal underlying 
the data, is established. The algorithm is considered in discrete time, and 
also for a continuous time limit arising when observations are frequent and subject
to large noise. The underlying dynamical model,
and assumptions about it, is sufficiently general to include the Lorenz '63
and '96 models, together with the incompressible
Navier-Stokes equation on a two-dimensional
torus. The analysis is limited to the case of complete observation of the
signal with additive white noise.  {Numerical results are presented for the 
Navier-Stokes equation on a two-dimensional torus for both complete and 
partial observations of the signal with additive white noise.}
\end{abstract}

\section{Introduction}
In recent years the ensemble Kalman filter (EnKF) \cite{eve06} has become a widely
used methodology for combining dynamical models with data. 
The algorithm is used in oceanography, oil reservoir simulation
and weather prediction \cite{BEVL98,EVL00,kal03,orl08}, for example. 
The basic idea of the method is to propagate an ensemble of particles
to describe the distribution of the signal given data, employing
empirical second order statistics to update the distribution in a Kalman-like
fashion when new data is acquired.
Despite the widespread use of the method, its behaviour is not well understood. In contrast with the ordinary Kalman filter, which applies to linear
Gaussian problems, it is difficult to find a mathematical justification for EnKF. The most notable progress in this direction can be found in \cite{legland,mandel2011convergence}, 
where it is proved that, for linear dynamics, the EnKF approximates
the usual Kalman filter in the large ensemble limit. This analysis is however 
far from being useful for practitioners who typically run the method with
small ensemble size on nonlinear problems. 
Furthermore there is an accumulation of numerical evidence showing that the EnKF,
and related schemes such as the extended Kalman filter,
can ``diverge'' with the meaning of ``diverge'' ranging from simply loosing
the true signal through to blow-up \cite{IKJ02,MH08,gottwald2013}. 
The aim of our work is to try and build mathematical foundations for the
analysis of the EnKF, in particular with regards to well-posedness
(lack of blow-up) and accuracy (tracking the signal over arbitrarily long
time-intervals). To make progress on such questions it is necessary to impose
structure on the underlying dynamics and we choose to work with dissipative
quadratic systems with energy-conserving nonlinearity, a class of problems
which has wide applicability \cite{MW06} and which
has proved to be useful in the
development of filters \cite{majda2012filtering}. 

In section \ref{sec:enkf} we derive the perturbed observation form
of the EnKF and demonstrate how it links to the randomized maximum likelihood
method (RML) which is widely used in oil reservoir simulation \cite{orl08}.
We also introduce the idea of variance inflation, widely used in many
practical implementations of the EnKF \cite{anderson2007adaptive}. 
Section \ref{sec:enkfd} contains theoretical analyses of the perturbed
observation EnKF, without and with variance inflation. Without
variance inflation we are able only to prove bounds which grow exponentially
in the discrete time increment underlying the algorithm (Theorem \ref{thm:disc});
with variance inflation we are able to prove filter accuracy and show that, in
mean square with respect to the noise entering the algorithm, the filter is uniformly close to the true signal for all large
times, provided enough inflation is employed (Theorem \ref{thm:disc_varinf}). 
These results, and in particular the one concerning variance inflation,
are similar to the results developed in \cite{brett2012accuracy}
for the 3DVAR filter applied to the Navier-Stokes equation and
for the 3DVAR filter applied to the Lorenz '63 model in \cite{LSS13}, 
as well as the similar analysis developed in \cite{moodey2013nonlinear} for
the 3DVAR filter applied to globally Lipschitz nonlinear dynamical systems.
In section \ref{sec:cts} we  describe a continuous time limit in which
data arrives very frequently, but is subject to large noise. If these
two effects are balanced appropriately a stochastic (partial) differential equation
limit is found and it is instructive to study this limiting continuous time
process. This idea was introduced in \cite{DLSZ12} for the 3DVAR filter
and is here employed for the EnKF filter. The primary motivation for
the continuous time limit is to obtain insight into the mechanisms underlying
the EnKF; some of these mechanisms are more transparent in continuous time. 
In section \ref{sec:enkfc} we analyze the well-posedness of the continuous
time EnKF (Theorem \ref{thm:cts}).
Section \ref{sec:num} contains numerical experiments which illustrate
and extend the theory, and section \ref{sec:conc} contains some brief 
concluding remarks.

Throughout the sequel we use the following notation. 
Let $\CH$ be a separable Hilbert space with norm $\normone{\cdot}$ and inner product $\inner{\cdot,\cdot}$. For a linear operator $C$ on $\CH$, we will write
$C \geq 0$ (resp. $C>0$) when $C$ is self-adjoint and positive semi-definite (resp.
positive definite). Given $C>0$, we will denote
\begin{equ}
\normone{\cdot}_C \defin \normone{C^{-1/2}(\cdot)}\;.
\end{equ}

\section{Set-Up}\label{sec:enkf}

\subsection{Filtering Distribution}
We assume that the observed dynamics are governed by an evolution equation
\begin{equ}\label{e:evo}
\frac{du}{dt} = F(u)\;
\end{equ}
which generates a one-parameter semigroup $\Psi_t:\CH \to \CH$. We also assume that $\CK \subset \CH$ is another Hilbert space, which acts as the \emph{observation space}. We 
assume that noisy observations are made in $\CK$ every $h$ time units
and write $\Psi=\Psi_h.$ We define $u_{j} = u(jh)$ for $j\in\naturals$ and, assuming
that $u_0$ is uncertain and modelled as Gaussian distributed, we obtain
\begin{equ}
u_{j+1} = \Psi(u_j) \;, \quad\text{with $u_0 \sim N(m_0,C_0)$} \;
\end{equ} 
for some initial mean $m_0$ and covariance $C_0.$ We are given the observations
\begin{equ}
y_{j+1} = Hu_{j+1} + \Gamma^{1/2}\xi_{j+1}\;, \quad\text{with $\xi_j \sim N(0,I)$
i.i.d.}\;,
\end{equ}
where $H \in \CL(\CH,\CK)$ is the \emph{observation operator} and $\Gamma \in \CL(\CH,\CH)$ with $\Gamma \geq 0$ is the covariance operator of the observational noise; the
i.i.d. noise sequence $\{\xi_j\}$ is assumed independent of $u_0.$ The
aim of filtering is to approximate the distribution of $u_{j}$ given $Y_j=\{y_{\ell}\}_{\ell=1}^j$ using a sequential update algorithm. That is, given the distribution $u_j|Y_j$ as well as the observation $y_{j+1}$, find the distribution of $u_{j+1}|Y_{j+1}$.
We refer to the sequence $\P(u_j|Y_j)$ as the {\bf filtering distribution}.

\subsection{Assumptions}\label{sec:nse}
To write down the EnKF as we do in section \ref{sec:enkf}, and indeed to derive the continuum
limit of the EnKF, as we do in section \ref{sec:cts}, 
we need make no further assumptions about the underlying dynamics and observation
operator other than those made above. However,
in order to analyze the properties of the EnKF, as we do in sections \ref{sec:enkfd}
and \ref{sec:enkfc}, we will need
to make structural assumptions and we detail these here. 
It is worth noting that the assumptions we make on the underlying dynamics
are met by several natural models used to test data assimilation algorithms.
In particular, the 2D Navier-Stokes equations on a torus, as well as both Lorenz '63 and '96 
models, satisfy Assumptions \ref{a:1} \cite{MW06,majda2012filtering,Tem99}. 
\par

\begin{ass} ({\bf Dynamics Model})
\label{a:1}
Suppose there is some Banach space $\CV$, equipped with norm $\normtwo{\cdot}$, that can be continuously embedded into $\CH$. We assume that \eqref{e:evo} has the form 
\begin{equ}
\frac{du}{dt} + \CA u + \CB (u,u) = f\;, 
\label{e:nse}
\end{equ}
where $\CA : \CH \to \CH$  is an unbounded linear operator satisfying 
\begin{equ}\label{e:linear1}
\inner{\CA u,u} \geq \lambda \normtwo{u}^2\;,
\end{equ}
for some $\lambda > 0$, $\CB$ is a symmetric bilinear operator $\CB : \CV \times \CV \to \CH$ and $f : \reals_+ \to \CH$. We furthermore assume that $\CB$ satisfies the identity
\begin{equ}\label{e:bilinear1}
\inner{\CB(u,u),u} = 0\;,
\end{equ}
for all $u\in\CH$ and also
\begin{equ}\label{e:bilinear2}
\inner{\CB(u,v),v} \leq c \normtwo{u} \normtwo{v} \normone{v}\;,
\end{equ}
for all $u,v,w \in \CH$, where $c>0$ depends only on the bilinear form. 
We assume that the equation \eqref{e:nse} has a unique
weak solution for all $u(0) \in \CH$, and
generates a one-parameter semigroup $\Psi_t: \CV \to \CV$ which may be
extended to act on $\CH.$ Finally we assume that there exists
a global attractor $\Lambda \subset \CV$ for the
dynamics, and constant $R>0$ such that for any initial condition 
$u_0\in\Lambda$, we have that $\sup_{t\geq 0} \normtwo{u(t)} \le  R$.
\rmkend
\end{ass}

\begin{remark}
In the finite dimensional case the final assumption on the existence
of a global attractor does not need to be made as it is a consequence
of the preceding assumptions made. To see this note that
\begin{equ}
\frac12 \frac{d|u|^2}{dt} + \lambda\|u\|^2  \le  \langle f,u \rangle. 
\label{e:est}
\end{equ}
The continuous embedding of $\CV$, together with the Cauchy-Schwarz inequality,
implies the existence of a strictly positive constant $\epsilon$ 
such that
\begin{equ}
\frac12 \frac{d|u|^2}{dt} + \epsilon|u|^2  \le  \frac{1}{2\delta}|f|^2+\frac{\delta}{2}|u|^2 
\label{e:est}
\end{equ}
for all $\delta>0$. Choosing $\delta=\epsilon$ gives the existence of
an absorbing set and hence a global attractor by Theorem 1.1 in Chapter I
of \cite{Tem99}.
In infinite dimensions the existence of a global attractor in $\CV$ follows
from the techniques in \cite{Tem99} for the Navier-Stokes equation
by application of more subtle inequalities relating to the bilinear
operator $\CB$ -- see section 2.2 in Chapter III of \cite{Tem99}.  
Other equations arising in dissipative fluid mechanics can be treated similarly.
\rmkend
\end{remark}

Whilst the preceding assumptions on the underlying dynamics apply to
a range of interesting models arising in applications, the following
assumptions on the observation model are rather restrictive; however we
have been unable to extend the analysis without making them. We will demonstrate, by means of numerical experiments, that our results 
extend beyond the observation scenario employed in the theory
\begin{ass} ({\bf Observation Model})
\label{a:2}
The system is completely observed so that $\CK=\CH$ and $H=I.$ Furthermore the
i.i.d. noise sequence $\{\xi_j\}$ is white so that $\xi_1 \sim N(0,\Gamma)$ with
$\Gamma=\gamma^2 I.$
\rmkend
\end{ass}

The following consequence of Assumption \ref{a:1} will be useful to us.

\begin{lemma}
\label{l:wpnse}
Let Assumptions \ref{a:1} hold.
Then there is $\beta \in \reals$ such that, for any $v_0 \in \Lambda$, 
$h>0$ and $w_0 \in \CH$,
\begin{equ}
|\Psi_h(v_0)-\Psi_h(w_0)| \leq e^{\beta h} \normone{v_0-w_0}\;.
\end{equ}
\end{lemma}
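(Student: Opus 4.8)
The plan is to derive the estimate from a Gr{\"o}nwall argument for the difference of the two trajectories, using the dissipativity \eqref{e:linear1} of $\CA$ to absorb all nonlinear contributions and the boundedness of the attractor $\Lambda$ to control the one surviving coefficient. Write $v(t)=\Psi_t(v_0)$, $w(t)=\Psi_t(w_0)$ and $e=v-w$. The common forcing $f$ cancels, so \eqref{e:nse} gives
\begin{equ}
\frac{de}{dt}+\CA e+\bigl(\CB(v,v)-\CB(w,w)\bigr)=0\;,
\end{equ}
and bilinearity alone gives $\CB(v,v)-\CB(w,w)=\CB(v,e)+\CB(e,w)$. Pairing with $e$ in $\CH$ and using \eqref{e:linear1},
\begin{equ}
\frac{1}{2}\frac{d}{dt}\normone{e}^2+\lambda\normtwo{e}^2\le-\inner{\CB(v,e),e}-\inner{\CB(e,w),e}\;.
\end{equ}

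The crux of the argument is to bound the two nonlinear terms by a quantity that does \emph{not} involve $\normtwo{w}$, which is not under control. The first term is handled by \eqref{e:bilinear2}, applied also with $v$ replaced by $-v$, which yields $|\inner{\CB(v,e),e}|\le c\normtwo{v}\,\normtwo{e}\,\normone{e}$. For the second term the naive appeal to symmetry would produce the unusable factor $\normtwo{w}$; instead I substitute $w=v-e$ and use symmetry of $\CB$ together with the cancellation identity \eqref{e:bilinear1}, $\inner{\CB(e,e),e}=0$:
\begin{equ}
\inner{\CB(e,w),e}=\inner{\CB(e,v),e}-\inner{\CB(e,e),e}=\inner{\CB(v,e),e}\;,
\end{equ}
so this term obeys the same bound. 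Since $v_0\in\Lambda$ gives $\normtwo{v(t)}\le R$ for all $t\ge0$, we obtain $\frac{1}{2}\frac{d}{dt}\normone{e}^2+\lambda\normtwo{e}^2\le 2cR\,\normtwo{e}\,\normone{e}$; Young's inequality $2cR\,\normtwo{e}\,\normone{e}\le\lambda\normtwo{e}^2+(c^2R^2/\lambda)\normone{e}^2$ absorbs the dissipative term, leaving $\frac{d}{dt}\normone{e}^2\le(2c^2R^2/\lambda)\normone{e}^2$, and Gr{\"o}nwall's inequality yields $\normone{e(h)}\le e^{(c^2R^2/\lambda)h}\normone{e(0)}$. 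Thus the lemma holds with $\beta=c^2R^2/\lambda$ (any larger value works, so $\beta$ may be taken positive).

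The main obstacle is not conceptual but a matter of regularity: when $w_0$ is merely in $\CH$ the trajectory $w(t)$, and hence $e(t)$, need not lie in $\CV$, so the pairing with $\CA e$ and the chain rule for $\normone{e}^2$ are only formal. This is dealt with in the standard way for dissipative quadratic equations of the form \eqref{e:nse}: carry out the computation first for Galerkin approximations (or for smooth initial data), using that weak solutions satisfy $e\in L^2(0,h;\CV)\cap C([0,h];\CH)$ and depend continuously on the initial datum in $\CH$, and then pass to the limit; the integrated form of Gr{\"o}nwall's inequality needs only $\int_0^h\normtwo{e(s)}^2\,ds<\infty$, which a weak solution possesses. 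For the finite-dimensional examples (Lorenz '63 and '96) this step is vacuous.
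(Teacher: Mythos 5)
Your proof is correct and follows essentially the same route as the paper's: the same error equation (your decomposition $\CB(v,e)+\CB(e,w)$ reduces, after substituting $w=v-e$ and using \eqref{e:bilinear1}, to the paper's $2\CB(v,e)-\CB(e,e)$), the same bound $2cR\normtwo{e}\normone{e}$ via the attractor radius, the same Young-plus-Gronwall absorption, differing only in the harmless value of $\beta$. Your closing remark on justifying the formal energy estimate for $w_0\in\CH$ via Galerkin approximation is a point the paper passes over silently, and is a welcome addition.
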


\begin{proof} 
Let $v,w$ denote the solutions of \eqref{e:nse} with
initial conditions $v_0, w_0$ respectively; define $e=v-w.$
Then
\begin{equ}
\frac{de}{dt} + \CA e +2 \CB (v,e) - \CB (e,e) = 0\;, 
\label{e:nse_error}
\end{equ}
with $e(0)=v_0-w_0.$
Taking the inner-product with $e$, using \eqref{e:linear1},
\eqref{e:bilinear1} and \eqref{e:bilinear2}, and choosing
$\delta=\lambda/(2cR)$, gives
\begin{align*}
\frac12 \frac{d}{dt}|e|^2+\lambda\|e\|^2 & \le 2c\|v\|\|e\| |e|\\
& \le 2cR\|e\| |e|\\
& \le cR(\delta \|e\|^2+\delta^{-1}|e|^2)\\
& =\frac{\lambda}{2}\|e\|^2+\frac{2}{\lambda}(cR)^2|e|^2.
\end{align*}
Thus
$$\frac{d}{dt}|e|^2 \le \frac{4}{\lambda}(cR)^2|e|^2$$
and the desired result follows from an application of
the Gronwall inequality.
\end{proof}

\section{The Ensemble Kalman Filter}
\label{sec:enkf}
\subsection{The Algorithm}

The idea of the EnKF is to represent the filtering distribution 
through an ensemble of particles, to propagate this ensemble under the model to
approximate the mapping $\P(u_j|Y_j)$ to $\P(u_{j+1}|Y_j)$ (refered to
as {\bf prediction} in the applied literature),
and to update the ensemble distribution to include the data point $Y_{j+1}$ 
by using a Gaussian approximation based on the second order statistics
of the ensemble (refered to as {\bf analysis} in the applied literature).  

The prediction step is achieved by simply flowing forward the ensemble under the model dynamics, that is 
\begin{equ}
\vhat_{j+1}^{(k)} = \Psi(v_j^{(k)})\;, \quad \text{for $k=1\dots K$}.
\end{equ}
The analysis step is achieved by performing a randomised version of the Kalman update formula, and using the empirical covariance of the prediction ensemble to compute the Kalman gain. 
There are many variants on the basic EnKF idea
and we will study the perturbed observation form of the method.

\par  
The algorithm proceeds as follows. 
\begin{enumerate}
\item Set $j=0$ and draw an independent set of samples 
$\{v_0^{(k)}\}_{k=1}^K$ from $N(m_0,C_0)$. 
\item (Prediction) Let $\vhat_{j+1}^{(k)} = \Psi(v_{j}^{(k)})$ and define $\Chat_{j+1}$ as the empirical covariance of $\{\vhat_{j+1}^{(k)}\}_{k=1}^K$. 
That is, 
\begin{equ}
\Chat_{j+1} = \frac{1}{K}\sum_{k=1}^K (\vhat_{j+1}^{(k)} - \vbar_{j+1})\otimes(\vhat_{j+1}^{(k)} - \vbar_{j+1})\;,
\end{equ}
where $\vbar_{j+1} = \frac{1}{K} \sum_{k=1}^K \vhat_{j+1}$ denotes the ensemble mean.
\item(Observation) Make an observation $y_{j+1} = Hu_{j+1} + \Gamma^{1/2}\xi_{j+1}$. Then, for each $k=1\dots K$, generate an artificial observation
\begin{equ}
y_{j+1}^{(k)} = y_{j+1} + \Gamma^{1/2}\xi_{j+1}^{(k)}\;,
\end{equ}
where $\xi_{j+1}^{(k)}$ are $N(0,I)$ distributed and pairwise independent.
\item (Analysis) Let $v_{j+1}^{(k)}$ be the minimiser of the functional
\begin{equ}
J(v) = \frac{1}{2}| y_{j+1}^{(k)} - v|_{\Gamma}^2 + \frac{1}{2}|\vhat_{j+1}^{(k)} - v|_{\Chat_{j+1}}\;.
\end{equ}
\item Set $j \mapsto j+1$ and return to step $2$. 
\end{enumerate}
The name ``perturbed observation EnKF'' follows from the construction of
the artificial observations $y_{j+1}^{(k)}$ which are found by perturbing the
given observation with additional noise.
The sequence of minimisers $v_{j+1}$ can be written down explicitly by simply solving the quadratic minimization problem. This straightforward exercise yields the following result.
\begin{prop}
The sequence $\{v_j^{(k)}\}_{j \geq 0}$ is defined by the equation
\begin{equ}
(I + \Chat_{j+1}H^T \Gamma^{-1} H)v_{j+1}^{(k)} = \vhat_{j+1}^{(k)} + \Chat_{j+1} H^T \Gamma^{-1}y_{j+1}^{(k)}\;,
\end{equ}
for each $k=1,\dots,K$.
\end{prop}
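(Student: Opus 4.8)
The plan is to recognise $J$ as a convex quadratic functional of $v$ and to characterise its minimiser $v_{j+1}^{(k)}$ by the vanishing of its first variation, then to rearrange the resulting linear equation into the stated form. Consistently with the appearance of $H$ in the statement, we read the data-misfit term as $\frac12\normone{y_{j+1}^{(k)}-Hv}_\Gamma^2$ and the second term as $\frac12\normone{\vhat_{j+1}^{(k)}-v}_{\Chat_{j+1}}^2$; both are nonnegative quadratic functionals, so $J$ is convex, and if $\Chat_{j+1}>0$ it is strictly convex and coercive on $\CH$, hence possesses a unique minimiser, determined by $DJ(v_{j+1}^{(k)})=0$.

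First I would compute the Fr\'echet derivative. Using the self-adjointness of $\Gamma^{-1}$ and $\Chat_{j+1}^{-1}$, the stationarity condition $DJ(v_{j+1}^{(k)})=0$ reads
\begin{equ}
  H^T\Gamma^{-1}\bigl(y_{j+1}^{(k)}-Hv_{j+1}^{(k)}\bigr)+\Chat_{j+1}^{-1}\bigl(\vhat_{j+1}^{(k)}-v_{j+1}^{(k)}\bigr)=0\;.
\end{equ}
Collecting the terms containing $v_{j+1}^{(k)}$ on the left and then applying $\Chat_{j+1}$ to both sides yields exactly
\begin{equ}
  \bigl(I+\Chat_{j+1}H^T\Gamma^{-1}H\bigr)v_{j+1}^{(k)}=\vhat_{j+1}^{(k)}+\Chat_{j+1}H^T\Gamma^{-1}y_{j+1}^{(k)}\;,
\end{equ}
which is the claimed identity. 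To confirm this genuinely determines $v_{j+1}^{(k)}$, I would note that $I+\Chat_{j+1}H^T\Gamma^{-1}H$ is invertible: by the Woodbury identity its inverse is $I-\Chat_{j+1}H^T(\Gamma+H\Chat_{j+1}H^T)^{-1}H$, and $\Gamma+H\Chat_{j+1}H^T\ge\Gamma>0$ is invertible.

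This is essentially a bookkeeping exercise, as the statement indicates; the one point that requires genuine care --- and which I expect to be the only real obstacle --- is that in the small-ensemble regime of interest the empirical covariance $\Chat_{j+1}$ is typically singular, of rank at most $K-1$, so $\normone{\vhat_{j+1}^{(k)}-v}_{\Chat_{j+1}}$, and hence $J$, is finite only on the affine subspace $\vhat_{j+1}^{(k)}+\mathrm{Ran}(\Chat_{j+1})$. I would handle this by minimising $J$ over that (finite-dimensional, closed) subspace, where it is strictly convex and coercive and so has a unique minimiser; the stationarity condition then becomes the projected relation $P\bigl[H^T\Gamma^{-1}(y_{j+1}^{(k)}-Hv_{j+1}^{(k)})\bigr]+\Chat_{j+1}^{+}(\vhat_{j+1}^{(k)}-v_{j+1}^{(k)})=0$, with $P$ the orthogonal projection onto $\mathrm{Ran}(\Chat_{j+1})$ and $\Chat_{j+1}^{+}$ the Moore--Penrose pseudo-inverse. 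Applying $\Chat_{j+1}$ both removes the projection (since $\Chat_{j+1}P=\Chat_{j+1}$) and, because $\vhat_{j+1}^{(k)}-v_{j+1}^{(k)}\in\mathrm{Ran}(\Chat_{j+1})$, turns $\Chat_{j+1}^{+}$ into the identity on that vector, so the same display is recovered. Alternatively one can simply regularise, replacing $\Chat_{j+1}$ by $\Chat_{j+1}+\epsilon I$, solve the resulting strictly convex problem on all of $\CH$, and let $\epsilon\downarrow0$, using the invertibility above together with a routine stability argument for minimisers of these quadratics. Everything else is elementary linear algebra.
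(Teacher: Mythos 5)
Your derivation is correct and is precisely the ``straightforward exercise'' the paper alludes to without writing out: impose the first-order condition $DJ(v_{j+1}^{(k)})=0$ and multiply through by $\Chat_{j+1}$ to obtain the stated linear system, reading the data-misfit term as $\frac12\normone{y_{j+1}^{(k)}-Hv}_{\Gamma}^2$ (the paper's display omits the $H$ and the square on the second term, as the RML functional later in the text confirms). Your treatment of the rank-deficient case, via minimisation over $\vhat_{j+1}^{(k)}+\mathrm{Ran}(\Chat_{j+1})$ or by regularising $\Chat_{j+1}+\epsilon I$ and passing to the limit, resolves a genuine subtlety that the paper silently glosses over, since in the small-ensemble regime $\Chat_{j+1}$ is indeed singular.
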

Hence, collecting the ingredients from the preceding,
the defining equations of the EnKF are given by
\begin{subequations}
\label{e:discrete_def}
\begin{align}
(I + \Chat_{j+1}H^T \Gamma^{-1} H)v_{j+1}^{(k)} &= \Psi(v_{j+1}^{(k)}) + \Chat_{j+1} H^T \Gamma^{-1}y_{j+1}^{(k)}\\
y_{j+1}^{(k)} &= y_{j+1} + \Gamma^{1/2}\xi_{j+1}^{(k)}\\ 
\vbar_{j+1} &= \frac{1}{K} \sum_{k=1}^K \Psi(v_{j+1}^{(k)})\\
\Chat_{j+1} &= \frac{1}{K}\sum_{k=1}^K \Bigl(\Psi(v_{j+1}^{(k)}) - \vbar_{j+1}\Bigr)\otimes\Bigl(\Psi(v_{j+1}^{(k)}) - \vbar_{j+1}\Bigr)\;.
\end{align}
\end{subequations}
There are other representations of the EnKF that are more algorithmically convenient, 
but the formulae \eqref{e:discrete_def} are better suited to our analysis.

\subsection{Connection to Randomized Maximum Likelihood}  
The analysis step of EnKF can be understood in terms of the 
Randomised Maximum Likelihood (RML) method widely used in oil reservoir
history matching applications \cite{orl08}. We will now briefly describe this method. Suppose that we have a random variable $u$ and that $u \sim N(\mhat,\Chat)$. Moreover, let $G$ be some linear operator and suppose we observe 
\begin{equ}
y = Gu + \xi \quad \text{where $\xi \sim N(0,\Gamma)\;.$}
\end{equ} 
One can use Bayes' theorem to write down the conditional density $\BP (u | y)$. In practice however, it is often sufficient (or sometimes even better) to simply have a collection of \emph{samples} $\{u^{(k)}\}_{k=1}^K$ from the conditional distribution, rather than the density itself. RML is a method of taking samples from the prior $N(\mhat,\Chat)$ and turning them into samples from the posterior. This is achieved as follows, given $\uhat^{(k)} \sim N(\mhat,\Chat)$ (samples from the prior), define $u^{(k)}$ for each $k=1 \dots K$ by $u^{(k)} = \argmin_u J^{(k)}(u)$ where
\begin{equ}
J^{(k)}(u) = \frac{1}{2}|y-Gu+\Gamma^{1/2}\xi^{(k)}|_{\Gamma}^2 + \frac{1}{2}|u-\uhat^{(k)}|_{\Chat}^2\;, 
\end{equ} 
where $\xi^{(k)}\sim N(0,I)$ and independent of $\xi$. The $u^{(k)}$ are then draws from the posterior distribution of $u|y$ which is a Gaussian with mean $m$ and covariance $C$. Since one can explicitly write down $(m,C)$, it may be checked that the $u^{(k)}$ defined as above are independent random variables of the form $u^{(k)} = m + C^{1/2}\zeta^{(k)}$, where $\zeta^{(k)} \sim N(0,I)$ i.i.d. and are hence draws from the desired posterior, as
we know show.
\begin{prop}
Assume that ${\hat C}$ is invertible. Then, in the above notation, we have that $u^{(k)} = m + C^{1/2}\zeta^{(k)}$, where $\zeta^{(k)} \sim N(0,I)$ i.i.d and $(m,C)$ are defined by
\begin{equs}
C^{-1} &= \Chat^{-1} + G^* \Gamma^{-1} G  \label{e:posterior1} \\
C^{-1} m &= G^* \Gamma^{-1} y + \Chat^{-1} \mhat \label{e:posterior2}\;.
\end{equs}
In particular, $u^{(k)}$ is a sample from the posterior of $u | y$. 
\end{prop}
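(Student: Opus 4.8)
The plan is to solve the quadratic minimization problem for $J^{(k)}$ explicitly, show that each minimizer $u^{(k)}$ is an affine function of the Gaussian vectors $\uhat^{(k)}$ and $\xi^{(k)}$, and then identify its mean and covariance with $(m,C)$ as defined in \eqref{e:posterior1}--\eqref{e:posterior2}. First I would compute the first-order optimality condition $\nabla J^{(k)}(u)=0$. Since $J^{(k)}$ is strictly convex and quadratic in $u$ (the Hessian is $\Chat^{-1}+G^*\Gamma^{-1}G$, which is positive definite because $\Chat$ is invertible and $G^*\Gamma^{-1}G\geq 0$), the minimizer is unique and solves
\begin{equ}
\bigl(\Chat^{-1}+G^*\Gamma^{-1}G\bigr)u^{(k)} = \Chat^{-1}\uhat^{(k)} + G^*\Gamma^{-1}\bigl(y+\Gamma^{1/2}\xi^{(k)}\bigr)\;.
\end{equ}
Writing $C^{-1}=\Chat^{-1}+G^*\Gamma^{-1}G$ as in \eqref{e:posterior1}, this reads $u^{(k)} = C\Chat^{-1}\uhat^{(k)} + CG^*\Gamma^{-1}y + CG^*\Gamma^{-1}\Gamma^{1/2}\xi^{(k)}$, an affine-Gaussian expression.

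Next I would take expectations. Using $\E\uhat^{(k)}=\mhat$ and $\E\xi^{(k)}=0$, together with independence of $\uhat^{(k)}$ and $\xi^{(k)}$ and the fact that $y$ is fixed data, one gets $\E u^{(k)} = C\bigl(\Chat^{-1}\mhat + G^*\Gamma^{-1}y\bigr)$, which is exactly $m$ by \eqref{e:posterior2}. For the covariance, one computes
\begin{equ}
\mathrm{Cov}(u^{(k)}) = C\Chat^{-1}\,\mathrm{Cov}(\uhat^{(k)})\,\Chat^{-1}C + CG^*\Gamma^{-1}\Gamma^{1/2}\,\mathrm{Cov}(\xi^{(k)})\,\Gamma^{1/2}\Gamma^{-1}GC\;,
\end{equ}
and inserting $\mathrm{Cov}(\uhat^{(k)})=\Chat$, $\mathrm{Cov}(\xi^{(k)})=I$ this collapses to $C\Chat^{-1}C + CG^*\Gamma^{-1}GC = C(\Chat^{-1}+G^*\Gamma^{-1}G)C = CC^{-1}C = C$. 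Since $u^{(k)}$ is an affine image of jointly Gaussian inputs it is Gaussian, so it may be written $u^{(k)} = m + C^{1/2}\zeta^{(k)}$ with $\zeta^{(k)}\sim N(0,I)$; independence across $k$ follows from the assumed independence of the pairs $(\uhat^{(k)},\xi^{(k)})$.

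Finally, to conclude that this is a draw from the true posterior, I would invoke the standard conjugate Gaussian update: for the linear-Gaussian model $u\sim N(\mhat,\Chat)$, $y=Gu+\xi$ with $\xi\sim N(0,\Gamma)$, Bayes' rule gives $u\mid y\sim N(m,C)$ with precisely the $(m,C)$ of \eqref{e:posterior1}--\eqref{e:posterior2}; this is an elementary computation completing the square in the exponent of the product of the prior and likelihood densities, or it can simply be cited. I expect no serious obstacle here: the only point requiring care is the infinite-dimensional setting, where one should either restrict to the case $\Gamma$ boundedly invertible (so that $G^*\Gamma^{-1}G$ and $\Chat^{-1}$ make sense and the optimality condition is genuinely well-posed) or interpret the manipulations weakly; given that the surrounding development of the EnKF treats $\Gamma^{-1}$ and $\Chat^{-1}$ as available operators, the formal computation above is exactly what is intended.
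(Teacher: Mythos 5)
Your proposal is correct and follows essentially the same route as the paper: both solve the quadratic first-order condition $(\Chat^{-1}+G^*\Gamma^{-1}G)u^{(k)}=\Chat^{-1}\uhat^{(k)}+G^*\Gamma^{-1}(y+\Gamma^{1/2}\xi^{(k)})$, identify the mean with $m$ via \eqref{e:posterior2}, and verify that the covariance collapses to $C(\Chat^{-1}+G^*\Gamma^{-1}G)C=C$, with the posterior identification done by completing the square in the negative log-density. The only cosmetic difference is that the paper first writes $\uhat^{(k)}$ in terms of a standard Gaussian before computing the covariance of the noise term, whereas you compute $\mathrm{Cov}(u^{(k)})$ directly from the affine representation.
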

\begin{proof}
Firstly, note that $C$ is invertible since ${\hat C}$ is invertible. Secondly, it is well known that the pair $(m,C)$ defined by \eqref{e:posterior1}, \eqref{e:posterior2} do indeed define the mean and covariance of the posterior. This can be easily verified by matching coefficients in the expression for the negative log-density 
\begin{equ}
\frac{1}{2}|y-Gu|_{\Gamma}^2 + \frac{1}{2}|u-\mhat|_{\Chat}^2\;.
\end{equ}
Hence, it suffices to verify that $u^{(k)} = m + C^{1/2}\zeta^{(k)}$. Since $\uhat^{(k)} \sim N(\mhat,\Chat)$, we can write $\uhat^{(k)} = \mhat + \Chat \eta^{(k)}$, for $\eta^{(k)} \sim N(0,I)$ i.i.d. Moreover, by matching coefficients in $J^{(k)}$, we see that 
\begin{equs}
(G^* \Gamma^{-1} G + \Chat^{-1})u^{(k)} &= G^* \Gamma^{-1} (y + \Gamma^{1/2} \xi^{(k)}) + \Chat^{-1} \uhat^{(k)}\\
&= \bigg(G^* \Gamma^{-1} y  + \Chat^{-1} \mhat \bigg)+ \bigg(G^*\Gamma^{-1/2} \xi^{(k)} + \Chat^{-1/2} \eta^{(k)}\bigg) \;.
\end{equs}
Using \eqref{e:posterior1}, this can be rewritten as
\begin{equ}
C^{-1}u^{(k)} = \bigg(G^* \Gamma^{-1} y  + \Chat^{-1} \mhat \bigg)+ \bigg(G^*\Gamma^{-1/2} \xi^{(k)} + \Chat^{-1/2} \eta^{(k)}\bigg) \;.
\end{equ}
Now, by \eqref{e:posterior1} and \eqref{e:posterior2} we have that 
\begin{equ}
m = C \bigg(G^* \Gamma^{-1} y  + \Chat^{-1} \mhat \bigg)
\end{equ}
and moreover, we see that 
\begin{equs}
 \BE & \bigg(C (G^*\Gamma^{-1/2} \xi^{(k)} + \Chat^{-1/2} \eta^{(k)}) \otimes  C(G^*\Gamma^{-1/2} \xi^{(k)} + \Chat^{-1/2} \eta^{(k)})\bigg)\\
& = C \big(G^* \Gamma^{-1} G + \Chat^{-1} \big) C = C\;.
\end{equs}
This completes the proof. 
\end{proof}

\par
The analysis step of perturbed observation EnKF fits into the above inverse problem framework, since we are essentially trying to find the conditional distribution of $u_{j+1}|Y_j$ given the observation $y_{j+1}$. Suppose we are given $\{v_{j}^{(k)}\}$ and think of this as a sample from an approximation to the distribution of $u_j | Y_j$. Then the ensemble $\{\vhat_{j+1}^{(k)}\} = \{\Psi(v_{j+1}^{(k)})\}$ can be thought of as a sample from 
an approximation to the distribution of $u_{j+1}| Y_j$. Now, define $v_{j+1}^{(k)}$ using the RML method, minimising the functional 
\begin{equ}
J^{(k)}(v) = \frac{1}{2}|y_{j+1}-Hv+\xi_{j+1}^{(k)}|_{\Gamma}^2 + \frac{1}{2}|v-\vhat^{(k)}|_{\Chat_{j+1}}^2\;, 
\end{equ} 
where $\xi_{j+1}^{(k)}$ are i.i.d. $N(0,\Gamma)$ and where the covariance $\Chat_{j+1}$ is defined as the empirical covariance  
\begin{equ}
\Chat_{j+1} = \frac{1}{K}\sum_{k=1}^K (\vhat_{j+1}^{(k)} - \vbar_{j+1})\otimes(\vhat_{j+1}^{(k)} - \vbar_{j+1})\;. 
\end{equ}
This is precisely the EnKF update step described in the algorithm above. There are several reasons that this update step only produces \emph{approximate} samples from the filtering distribution. First of all, the distribution $u_{j+1} | Y_j$ is certainly not Gaussian in general, unless the dynamics are linear, hence the RML method becomes an approximation of samples. And secondly, since this distribution is not in general Gaussian, the choice of $\Chat_{j+1}$ is another approximation.

Although the approximations outlined are clearly quite naive, the decision to use the empirical distribution instead of say the push-forward of the covariance $u_j | Y_j$ gives a huge advantage to the EnKF in terms of computational efficiency. Moreover, by avoiding linearization, the prediction ensemble exhibits more of the nonlinear dynamical effects present in the underlying model that are present in, say, the extended Kalman filter
\cite{jaz70}. However the method as implemented is prone to failures of various
kinds and a commonly used way of over-coming one of these, namely collapse of the
particles onto a single trajectory, is to use variance inflation. We explain this
next.

%
%

\subsection{Variance Inflation}

The minimization step of the EnKF computes an update which is a compromise
between the model predictions and the data. This compromise is weighted
by the empirical covariance on the model and the fixed noise covariance on the
data. The model typically allows for unstable (chaotic) divergence of
trajectories, whilst the data tends to stabilize.
Variance inflation is a technique of adding stability to the algorithm by 
increasing the size of the model covariance in order to weight the data
more heavily. The form of variance inflation that we will study is found by 
shifting the forecast covariance $\Chat$ by some positive definite matrix. That is, one sets
\begin{equ}
\Chat_{j+1} \mapsto \Chat_{j+1} + A \;,  
\end{equ}
in (\ref{e:discrete_def}a).
Here $A:\CH \to \CH $ is a linear operator with $A > 0$. 
Equation (\ref{e:discrete_def}a) becomes
\begin{equ}
(I + (A+  \Chat_{j+1})H^T \Gammatilde^{-1} H)v_{j+1}^{(k)} = \vhat_{j+1}^{(k)} + (A + \Chat_{j+1}) H^T \Gammatilde^{-1}y_{j+1}^{(k)}
\end{equ}
This has the effect of weighting the data more than the model.
Furthermore, by adding a positive definite operator, one eliminates the null-space of $\Chat_{j+1}$ 
(which will always be present if the number of ensemble members is smaller than the
dimension of $\CH$) effectively preventing the ensemble from becoming degenerate. 
A natural choice is
$A = \alpha^2 I$ where $\alpha \in \reals$ and $I$ is the identity operator. 
In the sequel it will become clear that variance inflation has the effect of strengthening a contractive term in the algorithm, leading to filter accuracy if $\alpha$ is chosen
large enough.

\section{Discrete-Time Estimates}
\label{sec:enkfd}
In this section, we will derive long-time estimates for the discrete-time EnKF, under the
Assumptions \ref{a:1} and \ref{a:2} on the dynamics and observation models
respectively. We study the algorithm without and then with variance inflation.
The technique is to consider evolution of the error between the filter and
the true signal underlying the data. To this end we define
\begin{equation}
\label{eq:error}
e_j^{(k)} = v_j^{(k)} - u_j.
\end{equation}
Throughout this section we use $\E$ to denote expectation with respect to the independent
i.i.d. noise sequences $\{\xi_j\}$ and $\{\xi_j^{(k)}\}$ and initial
conditions $u_0$ and $v_0^{(k)}.$

\subsection{Well-Posedness Without Variance Inflation}

\begin{thm}\label{thm:disc}
Let Assumptions \ref{a:1} and \ref{a:2} hold and consider the
algorithm \eqref{e:discrete_def}. Then 
\begin{equ}
\E \normone{e_j^{(k)}}^2 \leq e^{2\beta h j} \E \normone{e_0^{(k)}}^2 + 2K\gamma^2\Bigl( \frac{e^{2\beta h j }-1}{e^{2\beta h  }-1}\Bigr)\,
\end{equ}
for any $j\geq 1$. 
\end{thm}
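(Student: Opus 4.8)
The plan is to derive a recursion for $\E|e_{j+1}^{(k)}|^2$ in terms of $\E|e_j^{(k)}|^2$ and then iterate. Under Assumption \ref{a:2} we have $\CK=\CH$, $H=I$ and $\Gamma=\gamma^2 I$, so the update equation (\ref{e:discrete_def}a) reads $(I+\gamma^{-2}\Chat_{j+1})v_{j+1}^{(k)} = \vhat_{j+1}^{(k)} + \gamma^{-2}\Chat_{j+1}y_{j+1}^{(k)}$, where $\vhat_{j+1}^{(k)}=\Psi(v_j^{(k)})$. Writing $D_{j+1} := (I+\gamma^{-2}\Chat_{j+1})^{-1}$, which is well-defined and satisfies $0 < D_{j+1}\le I$ since $\Chat_{j+1}\ge 0$, we get $v_{j+1}^{(k)} = D_{j+1}\vhat_{j+1}^{(k)} + \gamma^{-2}D_{j+1}\Chat_{j+1}y_{j+1}^{(k)}$. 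The key algebraic identity to exploit is $\gamma^{-2}D_{j+1}\Chat_{j+1} = I - D_{j+1}$ (multiply through by $I+\gamma^{-2}\Chat_{j+1}$). Substituting $y_{j+1}^{(k)} = u_{j+1} + \gamma\xi_{j+1} + \gamma\xi_{j+1}^{(k)}$ and using $\Psi(u_j)=u_{j+1}$, I obtain
\begin{equ}
e_{j+1}^{(k)} = v_{j+1}^{(k)} - u_{j+1} = D_{j+1}\bigl(\Psi(v_j^{(k)})-\Psi(u_j)\bigr) + \gamma(I-D_{j+1})(\xi_{j+1}+\xi_{j+1}^{(k)})\;.
\end{equ}

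Next I would take $|\cdot|^2$ and expectations. The cross term vanishes upon conditioning: $\xi_{j+1}$ and $\xi_{j+1}^{(k)}$ are independent of $v_j^{(k)}$, $u_j$ and of $\Chat_{j+1}$ (the latter depends only on the forecast ensemble, hence on $\{v_j^{(\ell)}\}$ and not on the fresh noise at step $j+1$), so $\E\langle D_{j+1}(\Psi(v_j^{(k)})-\Psi(u_j)), (I-D_{j+1})(\xi_{j+1}+\xi_{j+1}^{(k)})\rangle = 0$. For the first term I use $\|D_{j+1}\|\le 1$ to bound $|D_{j+1}(\Psi(v_j^{(k)})-\Psi(u_j))| \le |\Psi(v_j^{(k)})-\Psi(u_j)|$, and then apply Lemma \ref{l:wpnse} — here I need $u_j$ to lie on the attractor $\Lambda$, which holds if the truth is initialized on $\Lambda$ (a standing assumption in this type of analysis) — to get $|\Psi(v_j^{(k)})-\Psi(u_j)| \le e^{\beta h}|e_j^{(k)}|$. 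For the noise term, $\|I-D_{j+1}\|\le 1$ gives $\E|\gamma(I-D_{j+1})(\xi_{j+1}+\xi_{j+1}^{(k)})|^2 \le \gamma^2\,\E|\xi_{j+1}+\xi_{j+1}^{(k)}|^2$; since $\xi_{j+1},\xi_{j+1}^{(k)}$ are independent $N(0,I)$ on $\CH$ and $\CH$ is effectively $\R^K$-dimensional in the relevant setting (or the trace is $K$), $\E|\xi_{j+1}+\xi_{j+1}^{(k)}|^2 = 2K$. This yields the one-step bound
\begin{equ}
\E|e_{j+1}^{(k)}|^2 \le e^{2\beta h}\,\E|e_j^{(k)}|^2 + 2K\gamma^2\;.
\end{equ}

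Finally I iterate this linear recursion: with $a := e^{2\beta h}$ and $b := 2K\gamma^2$, one gets $\E|e_j^{(k)}|^2 \le a^j \E|e_0^{(k)}|^2 + b(a^{j-1}+\cdots+a+1) = a^j\E|e_0^{(k)}|^2 + b\frac{a^j-1}{a-1}$, which is exactly the claimed bound (this geometric-sum step implicitly assumes $a\neq 1$, i.e. $\beta h \neq 0$; the case $\beta=0$ follows by taking a limit or is excluded since $\beta$ can always be enlarged). The main obstacle, or rather the point requiring care, is the independence/measurability bookkeeping that kills the cross term — specifically justifying that $\Chat_{j+1}$ (hence $D_{j+1}$) is measurable with respect to the $\sigma$-algebra generated by noise up to time $j$ and therefore independent of $(\xi_{j+1},\xi_{j+1}^{(k)})$ — together with the observation that both $D_{j+1}$ and $I-D_{j+1}$ are contractions, which is what makes the clean telescoping possible without any control on $\Chat_{j+1}$ itself. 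Everything else is routine.
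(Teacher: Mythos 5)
Your decomposition is exactly the paper's: writing $D_{j+1}=(I+\gamma^{-2}\Chat_{j+1})^{-1}$ and splitting $e_{j+1}^{(k)}=D_{j+1}\bigl(\Psi(v_j^{(k)})-\Psi(u_j)\bigr)+\gamma(I-D_{j+1})(\xi_{j+1}+\xi_{j+1}^{(k)})$ is the same as the paper's $r_1+r_2$ decomposition; the conditioning argument that kills the cross term, the use of Lemma \ref{l:wpnse} (with the truth on $\Lambda$), the two contraction bounds, and the discrete Gronwall iteration all match. The one genuine gap is the noise bound. You justify $\E|\xi_{j+1}+\xi_{j+1}^{(k)}|^2=2K$ by asserting that $\CH$ is ``effectively $\R^K$-dimensional'' --- but $K$ is the \emph{ensemble size}, not the dimension of the state space. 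For $N(0,I)$ noise on $\CH$ one has $\E|\xi|^2=\dim\CH$, which is $3$ for Lorenz '63, $40$ for Lorenz '96, and infinite for the Navier--Stokes setting the paper is built around; so the estimate you get from $\|I-D_{j+1}\|\le 1$ alone is either the wrong constant or divergent.

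The factor $K$ actually enters through the rank of the empirical covariance, and the fix is already latent in your own identity $I-D_{j+1}=\gamma^{-2}D_{j+1}\Chat_{j+1}$: since $\Chat_{j+1}$ is self-adjoint with rank at most $K$, one has $\Chat_{j+1}=\Chat_{j+1}P_{j+1}$, where $P_{j+1}$ is the ($\filtration_j$-measurable) orthogonal projection onto $(\ker\Chat_{j+1})^{\perp}$, a subspace of dimension at most $K$. Hence
\begin{equation*}
\bigl|(I-D_{j+1})(\xi_{j+1}+\xi_{j+1}^{(k)})\bigr| = \bigl|\gamma^{-2}D_{j+1}\Chat_{j+1}P_{j+1}(\xi_{j+1}+\xi_{j+1}^{(k)})\bigr| \le \bigl|P_{j+1}(\xi_{j+1}+\xi_{j+1}^{(k)})\bigr|\;,
\end{equation*}
using your contraction bound $\|\gamma^{-2}D_{j+1}\Chat_{j+1}\|\le 1$; conditionally on $\filtration_j$ the projection of each standard Gaussian onto a subspace of dimension at most $K$ has expected squared norm at most $K$, and independence of the two noises then yields the $2K\gamma^2$. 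This is precisely how the paper obtains the constant. With that repair the rest of your argument (including the $e^{2\beta h}\ne 1$ caveat on the geometric sum) goes through.
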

\begin{proof}
Firstly note that, under Assumption \ref{a:2}, 
the update rule (\ref{e:discrete_def}a) becomes
\begin{equ}
(I + \frac{1}{\gamma^2}\Chat_{j+1})v_{j+1}^{(k)}=\Psi(v_j^{(k)})+\frac{1}{\gamma^2}\Chat_{j+1}y_{j+1}^{(k)}\;.
\end{equ}
Secondly note that the underlying signal satisfies 
\begin{equ}
(I + \frac{1}{\gamma^2}\Chat_{j+1})u_{j+1}=\Psi(u_j)+\frac{1}{\gamma^2}\Chat_{j+1}\Psi(u_j)\;.
\end{equ}
Thus, subtracting from (\ref{e:discrete_def}a), we obtain 
\begin{equ}
(I + \frac{1}{\gamma^2}\Chat_{j+1})\eto{k}_{j+1} = \Psi(\vto{k}_j) - \Psi(u_j) +\frac{1}{\gamma^2}\Chat_{j+1}(y_{j+1}^{(k)} - \Psi(u_{j}))\;.
\end{equ}

 Now, if we define $r_1$ and $r_2$ by 
 \begin{align}
 (I + \frac{1}{\gamma^2}\Chat_{j+1})r_1 &=  \Psi(\vto{k}_{j} ) - \Psi(u_j)\\
 (I + \frac{1}{\gamma^2}\Chat_{j+1})r_2 &=\frac{1}{\gamma^2}\Chat_{j+1}(y_{j+1}^{(k)} - \Psi(u_{j}))\;,
 \end{align}
 then $\eto{k}_{j+1} = r_1 + r_2$. Moreover, since $\dk{\Chat_{j+1}}$ is symmetric and positive semi-definite, we have that 
\begin{equ}
\Bigl|\bigl(I + \frac{1}{\gamma^2}\Chat_{j+1}\bigr)^{-1}\Bigr|\leq1 \quad \text{and}\dk{\quad \Bigl|\bigl(I + \frac{1}{\gamma^2}\Chat_{j+1}\bigr)^{-1}\frac{1}{\gamma^2}\Chat_{j+1}\Bigr|\leq 1}\;.
\end{equ} 
Note also that $\Chat_{j+1}$ has rank $K$ and let $P_{j+1}$ denote
projection into the finite dimensional subspace orthogonal to
the kernel of $\Chat_{j+1}$. Then
\begin{align*}
\frac{1}{\gamma^2}\Chat_{j+1}(y_{j+1}^{(k)} - \Psi(u_{j}))
&=\frac{1}{\gamma^2}\Chat_{j+1}P_{j+1}(y_{j+1}^{(k)} - \Psi(u_{j}))\\
&=\frac{1}{\gamma^2}\Chat_{j+1}P_{j+1}(\xi_{j+1}+\xi_{j+1}^{(k)}).
\end{align*}
It follows from this and from Lemma \ref{l:wpnse} that
 \begin{equ}
 |r_1|\leq \normone{\Psi(\vto{k}_{j} ) - \Psi(u_j)} \leq e^{\beta h} \normone{\eto{k}_j}\;,
 \end{equ}
 and 
 \begin{equ}
 |r_2| \leq \normone{y_{j+1}^{(k)} - \Psi(u_{j})} = \normone{P_{j+1}(\xi_{j+1}+\xi_{j+1}^{(k)})}\;.
 \end{equ}
Now, if we let $\filtration_j$ be the $\sigma$-algebra generated by $\{\eto{k}_1,\dots,\eto{k}_j\}_{k=1}^K$ then, since $r_2$ has zero mean and is
conditionally independent of $r_1$, we have
\begin{equ}
 \E\left(\normone{\eto{k}_{j+1}}^2 | \filtration_j \right) = \normone{r_1}^2 + \E\bigl|P_{j+1}(\xi_{j+1}+\xi^{(k)}_{j+1})\bigl|^2 \leq e^{2\beta h} \normone{\eto{k}_j}^2 +2K\gamma^2\;. 
 \end{equ}
Here we have used the fact that $P_{j+1}$ projects onto a space of dimension
at most $K$. 
It follows that 
 \begin{equ}
 \E\normone{\eto{k}_{j+1}}^2 = \E \left( \E\left(\normone{\eto{k}_{j+1}}^2 | \filtration_j \right) \right) \leq  e^{2\beta h}\E\normone{\eto{k}_{j}}^2 + 2K\gamma^2\;,
 \end{equ}
and the result follows from the discrete Gronwall inequality. 
\end{proof}

The preceding result shows that the EnKF is well-posed and does
not blow-up  faster than exponentially. We now show that, with the
addition of variance inflation, a stronger result can be proved,
implying accuracy of the EnKF. 

\subsection{Accuracy With Variance Inflation}

We will focus on the variance inflation technique with $A = \alpha^2 I$. In this setting, again assuming $H=I$ and $\Gamma = \gamma^2 I$, the EnKF ensemble is governed by the following update equations. 
\begin{equ}\label{e:enkf_varinf}
(I + \frac{\alpha^2}{\gamma^2} I +  \frac{1}{\gamma^2}\Chat_{j+1})v_{j+1}^{(k)} = \Psi(v_{j}^{(k)}) + (\frac{\alpha^2}{\gamma^2}I + \frac{1}{\gamma^2}\Chat_{j+1}) y_{j+1}^{(k)}\;.
\end{equ}
We will now show that with variance inflation, one obtains much stronger long-time estimates than without it. In particular, provided the inflation parameter $\alpha$ is large enough, the ensemble stays within a bounded region of the truth, in a root-mean-square sense.  
\begin{thm}\label{thm:disc_varinf}
Let $\{v^{(k)}\}_{k=1}^K$ satisfy \eqref{e:enkf_varinf} and let $e_j^{(k)} = v_j^{(k)} - u_j$. Let $\theta = \frac{\gamma^2 }{\gamma^2 + \alpha^2}e^{2\beta h}$, then
\begin{equ}
\BE |e_j^{(k)}|^2 \leq \theta^j \BE |e_0^{(k)}|^2 + 2K\gamma^2\frac{1-\theta^{j}}{1-\theta}\;,
\end{equ}
for all $j\in \naturals$. In particular, if $\theta < 1$ then 
\begin{equ}
\lim_{j\to\infty} \BE |e^{(k)}_j|^2 \leq \frac{2K\gamma^2}{1-\theta}\;.
\end{equ}
\end{thm}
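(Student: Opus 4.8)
The plan is to follow the proof of Theorem~\ref{thm:disc}, the one new ingredient being that variance inflation improves the resolvent bound controlling the analysis step. Write $M_{j+1} := I + \frac{\alpha^2}{\gamma^2}I + \frac{1}{\gamma^2}\Chat_{j+1}$. Under Assumption~\ref{a:2} the true signal trivially satisfies $M_{j+1}u_{j+1} = \Psi(u_j) + (\frac{\alpha^2}{\gamma^2}I + \frac{1}{\gamma^2}\Chat_{j+1})\Psi(u_j)$, obtained by adding $(\frac{\alpha^2}{\gamma^2}I+\frac{1}{\gamma^2}\Chat_{j+1})u_{j+1}$ to both sides of $u_{j+1}=\Psi(u_j)$. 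Subtracting this from \eqref{e:enkf_varinf} and using $y_{j+1}^{(k)}-\Psi(u_j)=\xi_{j+1}+\xi_{j+1}^{(k)}$ gives
\[
M_{j+1}e_{j+1}^{(k)} = \bigl(\Psi(v_j^{(k)})-\Psi(u_j)\bigr) + \Bigl(\frac{\alpha^2}{\gamma^2}I+\frac{1}{\gamma^2}\Chat_{j+1}\Bigr)\bigl(\xi_{j+1}+\xi_{j+1}^{(k)}\bigr),
\]
and I would decompose $e_{j+1}^{(k)}=r_1+r_2$, with $r_1 = M_{j+1}^{-1}(\Psi(v_j^{(k)})-\Psi(u_j))$ the model term and $r_2$ carrying the noise, exactly as in Theorem~\ref{thm:disc}.

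Because $\Chat_{j+1}\ge 0$ we now have $M_{j+1}\ge (1+\frac{\alpha^2}{\gamma^2})I$, hence the inverse has operator norm $|M_{j+1}^{-1}|\le \frac{\gamma^2}{\gamma^2+\alpha^2}$, which is strictly below $1$ once $\alpha>0$; this is the source of the contraction. Together with Lemma~\ref{l:wpnse} (applied with the truth $u_j\in\Lambda$, which I assume throughout, as is standard in such analyses) it gives $|r_1|\le \frac{\gamma^2}{\gamma^2+\alpha^2}e^{\beta h}|e_j^{(k)}|$, so $|r_1|^2\le \theta|e_j^{(k)}|^2$ after using $(\frac{\gamma^2}{\gamma^2+\alpha^2})^2\le \frac{\gamma^2}{\gamma^2+\alpha^2}$. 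For $r_2$ one notes that $M_{j+1}^{-1}(\frac{\alpha^2}{\gamma^2}I+\frac{1}{\gamma^2}\Chat_{j+1})=I-M_{j+1}^{-1}$ has operator norm at most $1$, and then argues as in Theorem~\ref{thm:disc}: conditioning on $\filtration_j$, the noises $\xi_{j+1},\xi_{j+1}^{(k)}$ are mean zero and conditionally independent of $r_1$, so the cross term vanishes and the noise contributes at most $2K\gamma^2$, leaving $\E(|e_{j+1}^{(k)}|^2\mid\filtration_j)\le \theta|e_j^{(k)}|^2 + 2K\gamma^2$. Taking expectations and iterating (discrete Gronwall) yields $\E|e_j^{(k)}|^2\le \theta^j\E|e_0^{(k)}|^2 + 2K\gamma^2\sum_{i=0}^{j-1}\theta^i$, which is the stated bound, and when $\theta<1$ sending $j\to\infty$ gives $\limsup_{j\to\infty}\E|e_j^{(k)}|^2\le \frac{2K\gamma^2}{1-\theta}$.

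The step I expect to require the most care is the noise term $r_2$. In Theorem~\ref{thm:disc} the operator acting on $\xi_{j+1}+\xi_{j+1}^{(k)}$ is $M_{j+1}^{-1}\frac{1}{\gamma^2}\Chat_{j+1}$, which annihilates the kernel of $\Chat_{j+1}$ and hence has rank at most $K$ — precisely what produces the clean factor $2K\gamma^2$. With inflation, $I-M_{j+1}^{-1}$ no longer vanishes on that kernel, so one must isolate the rank-$K$ part $\frac{1}{\gamma^2}M_{j+1}^{-1}\Chat_{j+1}$ and argue separately that the residual $\frac{\alpha^2}{\gamma^2}M_{j+1}^{-1}$ contribution is still controlled by the ensemble size in the relevant setting. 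By contrast, once the resolvent bound $|M_{j+1}^{-1}|\le \frac{\gamma^2}{\gamma^2+\alpha^2}$ is established the contraction estimate for $r_1$, and hence the whole argument, is routine.
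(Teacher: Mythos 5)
Your proposal is, in outline, exactly the paper's proof: the paper's own argument consists of the two operator bounds you derive, $|M_{j+1}^{-1}|\le \gamma^2/(\gamma^2+\alpha^2)$ and $|M_{j+1}^{-1}(\tfrac{\alpha^2}{\gamma^2}I+\tfrac{1}{\gamma^2}\Chat_{j+1})|=|I-M_{j+1}^{-1}|\le 1$, followed by the assertion that one then proceeds exactly as in Theorem \ref{thm:disc}. Your treatment of the error equation, of $r_1$ (including the replacement of $(\gamma^2/(\gamma^2+\alpha^2))^2$ by $\gamma^2/(\gamma^2+\alpha^2)$ to produce $\theta$), and the final discrete Gronwall step all match the paper.

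The step you flag as requiring care --- the noise term $r_2$ --- is precisely the point the paper glosses over, and your concern is well founded rather than a quibble. In Theorem \ref{thm:disc} the factor $2K\gamma^2$ comes from the fact that $M_{j+1}^{-1}\tfrac{1}{\gamma^2}\Chat_{j+1}$ annihilates $\ker\Chat_{j+1}$, so only the projection $P_{j+1}$ of the white noise onto a subspace of dimension at most $K$ enters. With inflation, $I-M_{j+1}^{-1}$ acts on $\ker\Chat_{j+1}$ as multiplication by the nonzero scalar $\alpha^2/(\gamma^2+\alpha^2)$, so $\E\bigl(|r_2|^2\mid\filtration_j\bigr)=2\gamma^2\,\trace\bigl((I-M_{j+1}^{-1})^2\bigr)\ge 2\gamma^2\bigl(\dim\CH-K\bigr)\alpha^4/(\gamma^2+\alpha^2)^2$. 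This is not bounded by $2K\gamma^2$ in high dimension and is infinite in the infinite-dimensional setting of Assumption \ref{a:1}; the decomposition you suggest (rank-$K$ part plus the residual $\tfrac{\alpha^2}{\gamma^2}M_{j+1}^{-1}$) is the right move, but the residual is controlled by the ambient dimension, not by the ensemble size, so it cannot be absorbed into $2K\gamma^2$. In finite dimension $d$ the argument closes with $2K\gamma^2$ replaced by a constant of order $2\gamma^2\bigl(K+d\,\alpha^4/(\gamma^2+\alpha^2)^2\bigr)$, which still yields boundedness and (for small $\gamma$) accuracy when $\theta<1$; alternatively one can restrict the inflation to act only on the range of $\Chat_{j+1}$. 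In short, your proposal reproduces the paper's argument and correctly isolates its one unjustified step; that step is also unjustified in the paper's own proof and cannot be closed exactly as stated.
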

\begin{proof}
The proof is almost identical to the proof of Theorem \ref{thm:disc}. The only difference is that here we use the estimates
\begin{equ}
|(I + \frac{\alpha^2}{\gamma^2} I +  \frac{1}{\gamma^2}\Chat_{j+1})^{-1}| \leq \frac{\gamma^2}{\alpha^2 + \gamma^2} \quad \text{and} \quad |(I + \frac{\alpha^2}{\gamma^2} I +  \frac{1}{\gamma^2}\Chat_{j+1})^{-1}(\frac{\alpha^2}{\gamma^2} I +  \frac{1}{\gamma^2}\Chat_{j+1})| \leq 1\;.
\end{equ}
Proceeding exactly as above, we obtain
\begin{equ}
\BE |e^{(k)}_{j+1}|^2 \leq \frac{\gamma^2}{\alpha^2 + \gamma^2} e^{\beta h}\BE |e^{(k)}_j|^2 + 2K^2 \gamma^2  = \theta \BE |e^{(k)}_j|^2 + 2K^2 \gamma^2\;,
\end{equ}
and the result follows from the discrete Gronwall inequality. 
\end{proof}

\begin{rmk}
Choosing $\alpha$ large enough to ensure $\theta<1$ will result in filter boundedness; furthermore, if the observational noise standard deviation $\gamma$ is small then choosing $\alpha$ large enough results in filter accuracy.
\end{rmk}

\section{Derivation Of The Continuous Time Limit}
\label{sec:cts}
In this section we formally derive the continuous time scaling limits of the EnKF. The idea is to rearrange the update equation such that it resembles the \emph{discretization} of a stochastic ODE/PDE; we will simply refer to this as an SDE, be it in finite or infinite dimensions. We shall see that non-trivial limits only arise in situations where the noise is rescaled. 
\par
First, observe from \eqref{e:discrete_def} that
\begin{align*}
v_{j+1}^{(k)} - v_{j}^{(k)} &= \vhat_{j+1}^{(k)}- v_{j}^{(k)} - \Chat_{j+1}H^T \Gamma^{-1} H v_{j+1}^{(k)}+ \Chat_{j+1} H^T \Gamma^{-1}y_{j+1}^{(k)}\\
&=\Psi_h(v_{j}^{(k)})- v_{j}^{(k)} - \Chat_{j+1}H^T \Gamma^{-1} H v_{j+1}^{(k)}+ \Chat_{j+1} H^T \Gamma^{-1}y_{j+1}^{(k)}
\end{align*}
Now, if we attempt to take $h\to 0$, then the third and fourth terms on the right hand side above will lead to divergences when added up, since they are $\bigoh(1)$. This can be avoided by choosing an appropriate rescaling for the noise sources.  To this end, let $\Gamma = h^{-s}\Gamma$ for some $s > 0$, then we have 
\begin{equ}
v_{j+1}^{(k)} - v_{j}^{(k)} = \Psi_h(v_{j}^{(k)})- v_{j}^{(k)} - h^{s}\Chat_{j+1}H^T \Gamma_0^{-1} H v_{j+1}^{(k)}+ h^{s}\Chat_{j+1} H^T \Gamma_0^{-1}y_{j+1}^{(k)}\;.
\end{equ}
Now, if we define the primitive $z$ of $y$ by
\begin{equ}
z_{j+1}^{(k)} -z_{j}^{(k)} = h y_{j+1}^{(k)} 
\end{equ}
then we have coupled difference equations
\begin{subequations}
\begin{align}\label{e:disc_cts}
v_{j+1}^{(k)} - v_{j}^{(k)} &= \Psi_h(v_{j}^{(k)})- v_{j}^{(k)} - h^{s}\Chat_{j+1}H^T \Gamma_0^{-1} H v_{j+1}^{(k)}+ h^{s-1}\Chat_{j+1} H^T \Gamma_0^{-1}(z_{j+1}^{(k)} -z_{j}^{(k)}) \\
z_{j+1}^{(k)} -z_{j}^{(k)} &= h H u_{j+1} + h^{1-s/2}\Gamma_0^{1/2} (\xi_{j+1}^{(k)} + \xi_{j+1})\;.
\end{align}
\end{subequations}
The final step is to find an SDE for which the above represents a reasonable numerical scheme. Of course, this depends crucially on the choice of scaling parameter $s$. In fact, 
it is not hard to show that the one non-trivial limiting SDEs corresponds to the choice $s=1$. To see why this is the only valid scaling, notice that \eqref{e:disc_cts} implies that $s\geq 1$, since otherwise the $\bigoh(h^s)$ terms would diverge when added up. Likewise, from the second equation we must have $1-s/2 \geq 1/2$, for otherwise the stochastic terms would diverge when summed up, in accordance with the central limit theorem. Hence we must choose $s=1$. 
\par
If we invoke the approximation
$$\Psi_h(v)-v \approx hF(v)$$ 
then, in the case $s=1$, the system \eqref{e:disc_cts} is a mixed implicit-explicit 
Euler-Maruyama type scheme for the SDE   
\begin{subequations}
\label{e:enkf_cts11}
\begin{align}
d v^{(k)}  &= F(v^{(k)})dt - C(v)H^T \Gamma_0^{-1} H v^{(k)}dt + C(v)H^T\Gamma_0^{-1}{dz^{(k)}} \\
dz^{(k)} &= Hudt + \Gamma_0^{1/2}({dW^{(k)}} + dB)\;.
\end{align}
\end{subequations}
Here $W^{(1)},\dots W^{(K)},B$ are pairwise independent cylindrical Wiener processes, arising as limiting processes of the discrete increments $\xi^{(1)},\dots \xi^{(K)}, \xi$ respectively. We use $v$ to denote the collection $\{v^{(k)}\}_{k=1}^K$
and the operator $C(v)$ is the empirical covaraince of the particles defined as
follows: 
\begin{subequations}
\label{e:discrete_def22}
\begin{align}
\vbar &= \frac{1}{K} \sum_{k=1}^K \Psi(v^{(k)})\\
C(v) &= \frac{1}{K}\sum_{k=1}^K \Bigl(v^{(k)} - \vbar\Bigr)\otimes\Bigl(v^{(k)} - \vbar\Bigr)\;.
\end{align}
\end{subequations}
Thus we have the system of SDEs \eqref{e:enkf_cts11} for $k=1, \dots, K$, coupled
together through \eqref{e:discrete_def22}.

\begin{remark}

If we substitute the expression for $dz^{(k)}$ from (\ref{e:enkf_cts11}b)
into (\ref{e:enkf_cts11}a) then we obtain 
\begin{equ}\label{e:spde_s1}
d v^{(k)} = F(v^{(k)})dt - C(v)H^T \Gamma_0^{-1} H (v^{(k)} - u)dt + C(v)H^T\Gamma_0^{-1/2}({dW^{(k)}} + {dB})\;.
\end{equ}
Of course in practice the truth $u$ is not known to us, but the equation \eqref{e:spde_s1} has a very clear structure which highlights the mechanisms at play
in the EnKF. The equation is given by the original dynamics with the
addition of two terms, one which pulls the solution of each ensemble
member back towards the true signal $u$, and a second which
drives each ensemble member with a sum of two white noises, one independently
chosen for each ensemble member (coming from the perturbed observations)
and the second a common noise (coming from the noise in the data).

The stabilizing term, which draws the ensemble member back towards the
truth, and the noise, both act only orthogonal to the null-space of
the empirical covariance of the set of particles.
The perturbed observations noise contribution
will act to prevent the particles from synchronizing
which, in their absence, could happen. If the particles
were to synchronize then the covariance disappears and we simply
obtain the original dynamics 
\begin{equ}
\label{eq:nw}
d v^{(k)}  = F(v^{(k)})dt \; 
\end{equ}
for each ensemble member. 

In this context it is worth noting that 
another approach to the derivation of a
continuous time limit is to never introduce 
the process $z$ and only think of the equation for $v^{(k)}$. In particular, we have
\begin{align*}
v_{j+1}^{(k)} - v_{j}^{(k)} = \Psi_h(v_{j}^{(k)})- v_{j}^{(k)} &- h^{s}\Chat_{j+1}H^T \Gamma_0^{-1} H v_{j+1}^{(k)}\\ &+ h^{s}\Chat_{j+1} H^T \Gamma_0^{-1}  \left(H u_{j+1} + h^{-s/2}\Gamma_0^{1/2}(\xi_{j+1}^{(k)} +  \xi_{j+1}) \right)\;.
\end{align*}
In this case, we still must have $s\geq 1$ in order to get a limit, 
but there is no requirement for $s\leq 1$. However, it is easy to see that in the case $s > 1$, one obtains the trivial scaling limit \eqref{eq:nw}
so that each ensemble member evolves according to the model dynamics and the
data is not seen. Such scalings are of no interest since they do not elucidate
the structure of the model/data trade-off which is the heart of the EnKF.. 
\rmkend
\end{remark}

\subsection{Limits With Variance Inflation}
With variance inflation, the update equation \eqref{e:disc_cts} becomes
\begin{equs}
v_{j+1}^{(k)} - v_{j}^{(k)} &= \Psi_h(v_{j}^{(k)})- v_{j}^{(k)} - h^{s}(A+\Chat_{j+1})H^T \Gamma_0^{-1} H v_{j+1}^{(k)}+ h^{s-1}(A+\Chat_{j+1}) H^T \Gamma_0^{-1}(z_{j+1}^{(k)} -z_{j}^{(k)}) \\
z_{j+1}^{(k)} -z_{j}^{(k)} &= h H u_{j+1} + h^{1-s/2}\Gamma_0^{1/2} (\xi_{j+1}^{(k)} + \xi_{j+1})\;. \label{e:disc_cts_varinf}
\end{equs}
By the same reasoning, it is clear that the only non-trivial
continuous time limit is given by
\begin{equs}\label{e:enkf_cts2}
d v^{(k)}  &= F(v^{(k)})dt - (A+C(v))H^T \Gamma_0^{-1} H v^{(k)}dt + (A+C(v))H^T\Gamma_0^{-1}{dz^{(k)}} \\
dz^{(k)} &= Hudt + \Gamma_0^{1/2}({dW^{(k)}} + dB)\;.
\end{equs}

\section{Continuous-Time Estimates}
\label{sec:enkfc}
In this section we obtain long-time estimates for the continuous time EnKF, under 
Assumptions \ref{a:1} and \ref{a:2}. These are the same
assumptions used in the discrete case and our rersults are
analogous to Theorems \ref{thm:disc}. 
Under Assumptions \ref{a:1} and \ref{a:2}, the continuous time EnKF equations 
\eqref{e:spde_s1} for the ensemble $v=\{v^{(k)}\}_{k=1}^K$ become 
\begin{equ}
\label{eq:need}
dv^{(k)}(t) + \left(\CA v^{(k)}(t) +\CB(v^{(k)}(t),v^{(k)}(t))\right)dt = f -\frac{1}{\gamma^{2}}C(v)(v^{(k)}(t)-u(t))dt + \frac{1}{\gamma}C(v)(dW^{(k)}(t) + dB(t))\;.
\end{equ}
We set $e^{(k)} = \vto{k}-u$, and write 
$e$ for $e=\{e^{(k)}\}_{k=1}^K$. Note that $C(v)=C(e)$ since shifting the
origin does not change the empirical covariance. Thus we have
\begin{equ}
d\eto{k}(t) + \left(\CA \eto{k} +\CB(\eto{k},\eto{k})+2\CB(\eto{k},u) \right)dt = -\frac{1}{\gamma^2}C(e)\eto{k} dt + \frac{1}{\gamma}C(e)(dW^{(k)}(t) + dB(t))\;.
\end{equ}
\begin{rmk}\label{rmk:soln}
In the next theorem, we will analyse the growth properties of solutions to the SPDE \eqref{eq:need}, but to make the statement precise we must specify what we mean by a solution. We use the standard notion of a strong solution as found in \cite{prato92}. In essence, we assume that the solution is strong enough so that all the terms in the integral expression
\begin{equs}\label{e:integral}
v^{(k)}(t) &+  \int_0^t \left(\CA v^{(k)}(s) +\CB(v^{(k)}(s),v^{(k)}(s))\right)ds\\ &= v^{(k)}(0) + \int_0^t \bigg( f -\frac{1}{\gamma^{2}}C(v(s))(v^{(k)}(s)-u(s))\bigg)ds + \frac{1}{\gamma}\int_0^t C(v(s))(dW^{(k)}(s) + dB(s))
\end{equs} 
do indeed make sense and moreover, fall into the domain of It\^o's formula. To be precise, we say that $v = \{v^{(k)}\}_{k=1}^K$ is a strong solution to \eqref{eq:need} over the interval $[0,T]$ if $v$ satisfies \eqref{e:integral} ($\BP$-a.s.) and moreover we have that
\begin{equ}
\int_0^T \normone{\CA v^{(k)}(t)} +\normone{\CB(v^{(k)}(t),v^{(k)}(t))} + \normone{C(v)(v^{(k)}(t)-u(t))} dt < \infty \quad \text{$\BP$-a.s.} 
\end{equ}
and
\begin{equ}
 \int_0^T \normtwo{C(v(t))}_{HS}^2 dt < \infty  \quad \text{$\BP$-a.s.}
\end{equ}
As can be seen in \cite[Theorem 4.17]{prato92}, these conditions are sufficient to utilise It\^o's formula. We note that, in the case of \eqref{eq:need}, it is not unreasonable to assume the existence of strong solutions. Indeed, in finite dimensions any type of solution will be a strong solution and moreover, the very existence of a Lyapunov function, as obtained in the theorem below, is enough to guarantee global solutions \cite{Mao97}. In infinite dimensions this is not the case in general. However the 
fact that the noise is effectively finite dimensional, due to the presence 
of the finite rank covariance operator C, does mean that existence
of strong solutions may well be established on a case-by-case basis
in some infinite dimensional settings. However it is difficult to do
this at the level of generality we study in this paper and hence we will 
not make any concrete statements concerning the existence of strong 
solutions, rather we will simply assume that one exists and is unique.
\rmkend
\end{rmk}
We may now state and prove the well-posedness estimate for this equation,
analogous to Theorem \ref{thm:disc}. We assume that \eqref{eq:need} has
a unique solution for all $t \ge 0$. In finite dimensions this
is in fact a consequence of the mean square estimate provided by the theorem;
since we have been unable to prove this in the rather general infinite dimensional
setting, however, we make it an assumption. We let $(\Omega,{\cal F},\P)$
denote the probability space underlying the independent initial conditions
and the driving Brownian motions $(\{W^{(k)}\}, B)$, and $\E$ denotes
expectation with respect to this space. 
\begin{thm}\label{thm:cts}
Assume that \eqref{eq:need} has a unique strong solution, in the sense of Remark \ref{rmk:soln} and that Assumptions \ref{a:1} and \ref{a:2}
hold. We then have that 
\begin{equ}
\frac{1}{K}\sum_{k=1}^K\E \normone{\eto{k}(t)}^2 \leq \left(\frac{1}{K}\sum_{k=1}^K\E \normone{\eto{k}(0)}^2\right) \exp\left({\frac{4(cR)^2t}{\lambda}}\right)\;,
\end{equ}
where $c>0$ is the constant appearing in \eqref{e:bilinear2}. Moreover, we have that
\begin{equ}
\frac{1}{K}\sum_{k=1}^K \int_0^t \E\normtwo{\eto{k}(s)}^2 ds \leq \left(\frac{1}{K}\sum_{k=1}^K\E \normone{\eto{k}(0)}^2 \right) \frac{1}{\lambda} \exp\left({\frac{4(cR)^2t}{\lambda}}\right)\;.
\end{equ}

\end{thm}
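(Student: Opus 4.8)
The plan is to apply It\^o's formula to $\normone{\eto{k}(t)}^2$ for the SPDE governing $\eto{k}$, then sum over $k$ and exploit the algebraic structure of the nonlinearity and, crucially, the self-adjointness and positive semi-definiteness of the empirical covariance operator $C(e)$. First I would write, using It\^o's formula and the equation for $\eto{k}$,
\begin{equ}
\frac{1}{2}d\normone{\eto{k}}^2 + \Bigl(\inner{\CA\eto{k},\eto{k}} + \inner{\CB(\eto{k},\eto{k}),\eto{k}} + 2\inner{\CB(\eto{k},u),\eto{k}}\Bigr)dt = -\frac{1}{\gamma^2}\inner{C(e)\eto{k},\eto{k}}dt + \frac{1}{2\gamma^2}\trace\bigl(C(e)^2\bigr)dt + \frac{1}{\gamma}\inner{\eto{k},C(e)(dW^{(k)}+dB)}\;.
\end{equ}
Then I would use \eqref{e:bilinear1} to kill the cubic term $\inner{\CB(\eto{k},\eto{k}),\eto{k}}=0$, use \eqref{e:linear1} to get $\inner{\CA\eto{k},\eto{k}}\ge\lambda\normtwo{\eto{k}}^2$, and bound $2\inner{\CB(\eto{k},u),\eto{k}}$ via \eqref{e:bilinear2} together with $\sup_t\normtwo{u(t)}\le R$ (since $u$ lives on the attractor), exactly as in the proof of Lemma \ref{l:wpnse}: $2|\inner{\CB(\eto{k},u),\eto{k}}| \le 2cR\normtwo{\eto{k}}\normone{\eto{k}} \le \frac{\lambda}{2}\normtwo{\eto{k}}^2 + \frac{2(cR)^2}{\lambda}\normone{\eto{k}}^2$.

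The key new point, relative to Lemma \ref{l:wpnse}, is handling the data-driven terms. The drift term $-\frac{1}{\gamma^2}\inner{C(e)\eto{k},\eto{k}} \le 0$ because $C(e)\ge0$, so it only helps and can be discarded. The It\^o correction $\frac{1}{2\gamma^2}\trace(C(e)^2)$ is nonnegative and does not obviously have a sign-compatible bound on its own — this is where summing over $k$ is essential. After summing, $\sum_k \trace(C(e)^2) = K\,\trace(C(e)^2)$, and one uses $\trace(C(e)^2)\le \trace(C(e))^2$ or, more directly, the defining formula $C(e)=\frac1K\sum_k(e^{(k)}-\bar e)\otimes(e^{(k)}-\bar e)$ to relate $\trace(C(e)^2)$ (or $\trace(C(e))$) to $\frac1K\sum_k\normone{e^{(k)}-\bar e}^2\le\frac1K\sum_k\normone{e^{(k)}}^2$. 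Comparing with the exponent $4(cR)^2t/\lambda$ appearing in the statement, which is exactly twice the coefficient $\frac{2(cR)^2}{\lambda}$ from the bilinear estimate above, I suspect the intended argument actually shows the It\^o/covariance contributions are dominated or exactly cancelled by the negative drift term $-\frac{1}{\gamma^2}\inner{C(e)\eto{k},\eto{k}}$ when summed — indeed $\sum_k\inner{C(e)\eto{k},\eto{k}}$ can be rewritten using the covariance formula and compared termwise to $\sum_k\trace(C(e)^2)$, so that the net data contribution to $\frac{d}{dt}\sum_k\E\normone{\eto{k}}^2$ is $\le 0$. Taking expectations kills the martingale term (the stochastic integral is a true martingale given the integrability assumptions in Remark \ref{rmk:soln}).

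Putting this together yields, for $E(t):=\frac1K\sum_k\E\normone{\eto{k}(t)}^2$,
\begin{equ}
\frac{d}{dt}E(t) + \lambda\cdot\frac{1}{K}\sum_{k=1}^K\E\normtwo{\eto{k}(t)}^2 \le \frac{4(cR)^2}{\lambda}E(t)\;,
\end{equ}
(the factor discrepancy being absorbed because $\normtwo{\eto{k}}^2$ appears with coefficient $\lambda - \frac{\lambda}{2} = \frac{\lambda}{2}$ on the left before moving half to the right as in Lemma \ref{l:wpnse}; after the dust settles one retains $\lambda$ on the dissipation by a slightly sharper split, or equivalently one keeps $\frac{\lambda}{2}$ and doubles constants — either way the stated exponent is $\frac{4(cR)^2}{\lambda}$). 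Dropping the nonnegative dissipation term and applying Gr\"onwall gives the first inequality $E(t)\le E(0)\exp(4(cR)^2t/\lambda)$. For the second inequality, one keeps the dissipation term: integrating the differential inequality over $[0,t]$ and using the just-proved bound on $E(s)$ gives $\frac1K\sum_k\int_0^t\E\normtwo{\eto{k}(s)}^2ds \le \frac1\lambda\bigl(E(0) + \frac{4(cR)^2}{\lambda}\int_0^t E(s)\,ds\bigr) \le \frac{E(0)}{\lambda}\exp(4(cR)^2t/\lambda)$, after bounding $\int_0^t E(s)ds \le \frac{\lambda}{4(cR)^2}(\exp(4(cR)^2t/\lambda)-1)E(0)$ and simplifying. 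The main obstacle is the bookkeeping showing that the combination of the negative feedback drift $-\frac{1}{\gamma^2}\inner{C(e)\eto{k},\eto{k}}$ and the positive It\^o correction $\frac{1}{2\gamma^2}\trace(C(e)^2)$ is nonpositive after summing over $k$ — this requires unwinding the empirical covariance definition carefully, and is the one genuinely new estimate beyond what appears in Lemma \ref{l:wpnse}.
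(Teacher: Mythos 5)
Your proposal follows essentially the same route as the paper: It\^o's formula, the energy estimates on $\CA$ and $\CB$ exactly as in Lemma \ref{l:wpnse}, and the key observation that after summing over $k$ the identity $\trace\left(C(e)^2\right)=\frac{1}{K}\sum_{m=1}^K\inner{\eto{m},C(e)\eto{m}}-\inner{\ebar,C(e)\ebar}$ makes the net data contribution equal to $-\frac{2}{\gamma^2}\inner{\ebar,C(e)\ebar}\le 0$, followed by Gronwall. The only slip is the It\^o correction, which is $\frac{1}{\gamma^2}\trace\left(C(e)^2\right)$ rather than $\frac{1}{2\gamma^2}\trace\left(C(e)^2\right)$ because $W^{(k)}$ and $B$ are independent so their quadratic variations add; with the correct factor the cancellation you suspected is exact up to the $-\inner{\ebar,C(e)\ebar}$ term, and your conclusion stands.
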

\begin{proof}
Using It\^o's formula, one can show that
\begin{align}
\E \normone{\eto{k}(t)}^2 = \E \normone{\eto{k}(0)}^2+\int_0^t &-2\E\inner{\eto{k}, \CA \eto{k}+ \CB(\eto{k},\eto{k})+2\CB(\eto{k},u)}ds\notag\\&+ \int_0^t \frac{-2}{\gamma^2}\E\inner{\eto{k},C(e)\eto{k}} + \frac{2}{\gamma^2}\E\trace\left(C(e)^2 \right)  ds\;.\label{e:cts_ito1}
\end{align}
Now, if we let $\{\xi_i\}_{i\in I}$ be some orthonormal basis of $\CH$, then we can simplify the above using the identity
\begin{align*}
\trace\left(C(e)^2\right) = \sum_{i\in I} \inner{C(e)\xi_i,C(e)\xi_i}\;.
\end{align*}
By expanding the right $C(e)$, we obtain
\begin{align*}
\trace\left(C(e)^2\right)&=
\sum_{i\in I}\frac{1}{K}\sum_{m=1}^K \inner{\eto{m}-\ebar, \xi_i}\inner{\eto{m},C(e)\xi_i}\\
&= \frac{1}{K}\sum_{m=1}^K \sum_{i\in I} \inner{\eto{m} -\ebar, \xi_i}\inner{C(e)\eto{m},\xi_i}\\
&= \frac{1}{K}\sum_{m=1}^K  \inner{\eto{m} -\ebar,C(e)\eto{m}}\\ &=  \frac{1}{K}\sum_{m=1}^K  \inner{\eto{m},C(e)\eto{m}} - \inner{\ebar,C(e)\ebar}\;. 
\end{align*}
Substituting this into \eqref{e:cts_ito1} and summing over $k=1 \dots K$, we obtain
\begin{align*}
\frac{1}{K}\sum_{k=1}^K\E \normone{\eto{k}(t)}^2 = \frac{1}{K}\sum_{k=1}^K\E \normone{\eto{k}(0)}^2&+\frac{1}{K}\sum_{k=1}^K\int_0^t -2\E\inner{\eto{k}, \CA \eto{k}+ \CB(\eto{k},\eto{k})+2\CB(\eto{k},u)}ds\\
&  - \frac{2}{\gamma^2}\int_0^t\E\inner{\ebar,C(e)\ebar} ds\;.
\end{align*}
And since $C(e)$ is positive semi-definite, we have
\begin{equ}
\frac{1}{K}\sum_{k=1}^K\E \normone{\eto{k}(t)}^2 \leq \frac{1}{K}\sum_{k=1}^K\E \normone{\eto{k}(0)}^2+\frac{1}{K}\sum_{k=1}^K\int_0^t -2\E\inner{\eto{k}, \CA \eto{k}+ \CB(\eto{k},\eto{k})+2\CB(\eto{k},u)}ds\;.
\end{equ}
Finally, using the assumptions on $\CA, \CB$, we have that 
\begin{align*}
-\inner{\eto{k}, \CA \eto{k}+ \CB(\eto{k},\eto{k})+2\CB(\eto{k},u)} &\leq -\lambda \normtwo{\eto{k}}^2 + 2\Bigl|\inner{\CB(\eto{k},u),\eto{k}}\Bigr|\\
&\leq -\lambda \normtwo{\eto{k}} +2c\normone{\eto{k}}\normtwo{\eto{k}}\normtwo{u} \\
&\leq -\lambda\normtwo{\eto{k}} + cR \left( \delta^{-1}\normone{\eto{k}}^2 + \delta \normtwo{\eto{k}}^2 \right)\;,
\end{align*}
recalling that $\sup_{t\geq 0}\normtwo{u(t)} \le R$. Putting this altogether, we have that
 \begin{align*}
 \frac{1}{K}\sum_{k=1}^K\E \normone{\eto{k}(t)}^2 +\frac{1}{K}\sum_{k=1}^K \int_0^t 2\left(\lambda - cR\delta  \right)\E\normtwo{\eto{k}(s)}^2 \leq  \frac{1}{K}\sum_{k=1}^K\E \normone{\eto{k}(0)}^2 + \frac{1}{K}\sum_{k=1}^K \int_0^t 2cR \delta^{-1} \E \normone{\eto{k}(s)}^2 ds \;.
 \end{align*}
 If we pick $\delta = \lambda/(2cR)$ then we obtain the estimate
\begin{equ}
\frac{1}{K}\sum_{k=1}^K\E \normone{\eto{k}(t)}^2 + \frac{1}{K}\sum_{k=1}^K \int_0^t \lambda\E\normtwo{\eto{k}(s)}^2 \leq  \frac{1}{K}\sum_{k=1}^K\E \normone{\eto{k}(0)}^2+ \frac{1}{K}\sum_{k=1}^K \int_0^t \frac{4 (cR)^2}{\lambda} \E \normone{\eto{k}(s)}^2 ds\;,
\end{equ}
and the result follows from Gronwall's inequality. As a consequence of this, we see that
\begin{align*}
\frac{1}{K}\sum_{k=1}^K \int_0^t \lambda\E\normtwo{\eto{k}(s)}^2 ds &\leq  \frac{1}{K}\sum_{k=1}^K\E \normone{\eto{k}(0)}^2+ \frac{1}{K}\sum_{k=1}^K \int_0^t \frac{4 (cR)^2}{\lambda} \E \normone{\eto{k}(s)}^2 ds \\
&\leq  \frac{1}{K}\sum_{k=1}^K\E \normone{\eto{k}(0)}^2+  \frac{4 (cR)^2}{\lambda}\left(\frac{1}{K}\sum_{k=1}^K\E \normone{\eto{k}(0)}^2\right)\int_0^t  \exp\left({\frac{4(cR)^2s}{\lambda}}\right)  ds \\
&= \frac{1}{K}\sum_{k=1}^K\E \normone{\eto{k}(0)}^2 \exp\left({\frac{4(cR)^2t}{\lambda}}\right) \;,
\end{align*} 
 which proves the second result and hence the theorem. 
\end{proof}
\begin{remark} 
%
%
In this case of non-trivial $H$ and $\Gamma$, the above argument does not work, but nevertheless it is still informative to see \emph{why} it doesn't work. Indeed, if we apply the exact same argument to the case of arbitrary $H,\Gamma$, we still obtain the identity
\begin{align*}
\frac{1}{K}\sum_{k=1}^K\E \normone{\eto{k}(t)}^2 = \frac{1}{K}\sum_{k=1}^K\E \normone{\eto{k}(0)}^2&+\frac{1}{K}\sum_{k=1}^K\int_0^t -2\E\inner{\eto{k}, \CA \eto{k}+ \CB(\eto{k},\eto{k})+2\CB(\eto{k},u)}ds\\
&  - 2\int_0^t\E\inner{\ebar,C(e)H^T \Gamma^{-1}H \ebar} ds\;.
\end{align*}
The reason we cannot proceed further is that even though $C(e)$ and $H\Gamma^{-1}H$ are themselves positive semi-definite and self adjoint, the same is not necessarily true for the product. 
\rmkend
\end{remark}
%


\section{Numerical Results}
\label{sec:num}

In this section we confirm the validity of the theorems derived 
in the previous sections for variants of the EnKF when applied
to the dynamical system \eqref{e:nse}.  Furthermore, we extend 
our numerical explorations beyond the strict range of validity 
of the theory and, in particular, consider the case of partial 
observations.  We conduct all of our numerical experiments in 
the case of the incompressible Navier-Stokes equation on a two dimensional torus.

We observe not only well-posedness, but indeed {\em boundedness} 
of the ensemble for both complete and partial observations, over 
long time-scales compared with the natural variability of the
dynamical system itself. However, the filter is always inaccurate 
when used without inflation.  We thus turn to study the effect
of inflation and note that our results indicate the filter 
can then always be made accurate, even in the case of partial observations,
provided that sufficiently many low Fourier modes are observed.  
In the case that only the high Fourier modes are observed the filter cannot be made accurate with inflation.

\subsection{Setup}
\label{sec:numset}

Let $\T^{2}$ denote the two-dimensional
torus of side $L:$ $[0,L) \times [0,L)$
with periodic boundary conditions. We
consider the equations
\begin{equation*}
\begin{array}{ccc}
\pd_{t}u(x, t) - \nu \Delta u(x, t)
 + u(x, t) \cdot \nabla u(x, t) + \nabla p(x, t) 
&=& f(x)  
\\
\nabla \cdot u(x, t) &=& 0 
\\
u(x, 0) &=& u_{0}(x) 
\end{array}
\end{equation*}
for all $x \in \T^{2}$ and $t\in(0, \infty)$. 
Here $u \colon \T^{2} \times (0, \infty) \to \R^{2}$ is a time-dependent vector field representing the velocity, 
$p \colon \T^{2} \times (0,\infty) \to \R$ is a time-dependent scalar field representing the pressure and $f \colon \T^{2} \to \R^{2}$ 
is a vector field representing the forcing which we take as 
time-independent for simplicity. The parameter $\nu$ 
represents the viscosity. We assume throughout that $u_0$
and $f$ have average zero over $\T^2$; it then follows
that $u(\cdot,t)$ has average zero over $\T^2$ for all
$t>0$. 

Define
$${\mathsf T}:= \left\{{\rm trigonometric}\,{\rm polynomials}\,\,
u:\T^2 \to {\mathbb R}^2\,\Bigl|\, \nabla \cdot u = 0, \,\int_{\T^{2}} u(x) \, dx = 0 \right\}
$$
and $\cH$ as the closure of ${\mathsf T}$ with respect to the
norm in $(L^{2}(\T^{2}))^{2} = L^{2}(\T^{2},\R^2)$.
We let $P:(L^{2}(\T^{2}))^{2} 
\to \cH$ denote the Leray-Helmholtz orthogonal projector. 
Given $m= (m_{1}, m_{2})^{\mathrm{T}}$, define $m^{\perp} := (m_{2}, -m_{1})^{\mathrm{T}}$. Then an orthonormal basis for (a complexified)
$\cH$ is given by $\psi_{m} \colon \T^{2} \to \C^{2}$, where 
\begin{equation}
\label{eq:fb}
\psi_{m} (x) := \frac{m^{\perp}}{|m|} \exp\Bigl(\frac{2 \pi i m \cdot x}{L}\Bigr)
\end{equation}
for $m \in \Z^{2} \setminus \{0\}$. 
Thus for $u \in \cH$ we may write
$$
u(x) = \sum_{m \in \Z^{2} \setminus \{0\}} u_{m} \psi_{m}(x)
$$
where, since $u$ is a real-valued function, we have the 
reality constraint $u_{-m} = - \overline{u_{m}}.$
We define the projection operators $\Pl: \cH \to \cH$ 
and $\Ql:\cH \to \cH$ for 
$\lambda \in\mathbb{N}\cup\{\infty\}$ 
by
$$(\Pl u\bigr)(x)  = \sum_{|2\pi m|^2 <\lambda L^2} u_{m} \psi_{m}(x),
\quad \Ql=I-\Pl.$$
Below we will choose the observation operator $H$ to be $\Pl$ or $\Ql$.
%

We define 
$A = -\nu P \Delta$
the Stokes operator, and, for every $s \in \R$, 
define the Hilbert spaces $\cH^s$ to be the domain of $A^{s/2}.$ 
We note that $A$ is diagonalized in $\cH$ in the basis comprised
of the $\{\psi_m\}_{m \in {\Z}^2\backslash\{0\}}$. 
We denote by $|\cdot|$ the norm on $\cH:=\cH^0$.  

Applying the projection $P$ to the Navier-Stokes
equation for $f=Pf$ we may write it as an ODE in $\cH$ as in \eqref{e:nse},
with $\CB(u,v)$ the {\it symmetric} 
bilinear form defined by 
$$\CB(u,v) = \frac12 P(u \cdot \nabla v)+ \frac12 P(v \cdot \nabla u)$$
for all $u,v\in\CV$.
See \cite{constantin1988navier} 
for details of this formulation of the Navier-Stokes
equation as an ODE in $\cH$.

We fix the domain size $L=2$. The forcing in $f$ is taken to be $f\propto\nabla^{\perp}\Psi$,
where $\Psi=\cos(\pi k_f \cdot x)$ and $\nabla^{\perp}=J\nabla$ with $J$
the canonical skew-symmetric matrix.
The parameters  
$(\nu,k_f,|f|)$ in (\ref{e:nse}), 
where $k_f$ is the wavevector of the forcing frequency,
are fixed throughout all of the experiments
shown in this paper at values which yield a chaotic regime;
specifically we take 
$(\nu,k_f,|f|) \approx \{0.01,(5,5),10\}$. 

Our first step in constructing a numerical experiment 
is to compute the true solution $u$ solving  equation \eqref{e:nse}. 
The true initial condition $u_0$ is randomly drawn from $N(0,\nu^2 A^{-2})$.
For all the experiments presented below, we will then begin with an initial
ensemble which is far from the truth, in order to probe the accuracy
and stability of the filter for the given 
parameters.  Specifically we let $m_0 \sim N(u(0),\beta \frac{4\pi^2 \nu}{L^2} A^{-1})$
and $v_0^{(k)} \sim N(m_0,(\beta/25)\frac{4\pi^2 \nu}{L^2}{A}^{-1})$ with 
$\beta=0.25$. 
Throughout, $\gamma=0.01$. We use the notation $m(t)$ to denote the mean
of the ensemble.

The method used to approximate the forward model
is a modification of a fourth-order Runge-Kutta method,
ETD4RK \cite{cox2002exponential}, in which the Stokes semi-group is
computed exactly by working in the incompressible Fourier basis
$\{\psi_{m}(x)\}_{m \in {\mathbb Z}^2\backslash\{0\}}$, and Duhamel's
principle (variation of constants formula) is used to incorporate the
nonlinear term.  We use a time-step of $dt = 0.005$.
Spatially, a Galerkin spectral method
\cite{hesthaven2007spectral} is used, in the same basis,
and the convolutions arising from products in the nonlinear term are
computed via FFTs.

Before proceeding with the numerical experiments concerning the
EnKF, it is instructive to run an experiment
in which $H=0$ so that the ensemble evolves according to the underlying attractor with
no observations taken into account.  This is shown in Figure \ref{fig:dfni1}.
Notice that the statistics of the ensemble remain 
well-behaved.  This is in stark contrast to the case of evolving ExKF without observations,
in which case the covariance will have an exponential growth rate 
corresponding asymptotically to the Lyapunov exponents of the attractor.
Figure \ref{fig:dfni1} sets a reference scale against which subsequent
experiments, which include observations, should be compared.

\begin{figure*} 
\center
\includegraphics[width=1\textwidth]{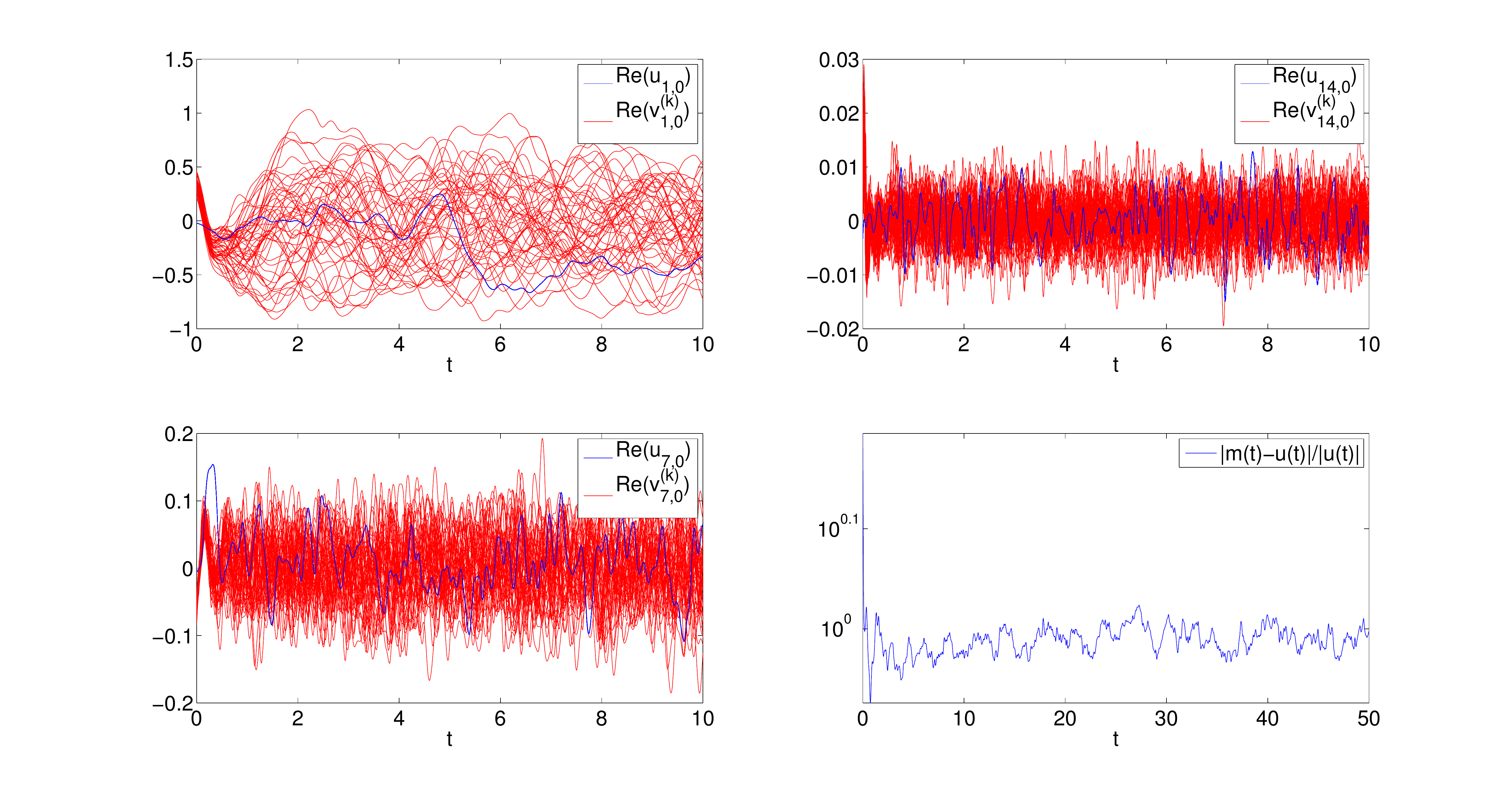}
\caption{Trajectories of various modes  (at observation times) 
of the estimators $v^{(k)}$ 
and the signal $u$ are depicted above 
along with the relative error
in the $L^2$ norm, $|m-u|/|u|$, 
for $H=\Pl$, with $\lambda=0$ -- i.e. nothing is observed.}
\label{fig:dfni1}
\end{figure*}

\subsection{Discrete Time}
\label{sec:numdisc}

Here we explore the case of discrete-time observations  
by means of numerical experiments, illustrating 
the results of section \ref{sec:enkfd}. We consider the cases $H=\Pl$ with
$\lambda=\infty$ so
that all Fourier modes represented on the grid are observed, as well as
both $H=\Pl$ and $H=\Ql$ with $\lambda<\infty$.

\subsubsection{Full observations}
\label{sec:discfull}

Here we consider observations made at all numerically 
resolved, and hence observable, wavenumbers in the system;
hence $K=32^2$, not including padding in the spectral domain which
avoids aliasing.  So, effectively we approximate the case $H=\Pl$
where $\lambda=\infty$.
Observations of the full-field are made every $J=20$ time-steps.
In Figure \ref{fig:dfni} there is no variance inflation and, whilst
typical ensemble members remain bounded on the time-scales shown,
the error between the ensemble mean and the truth is ${\cal O}(1)$;
indeed comparison with Figure \ref{fig:dfni1} shows that the error
in the mean is in fact {\em worse} than that of an ensemble evolving
without access to data. Using variance inflation removes this problem
and filter accuracy is obtained: see Figure \ref{fig:dfi}.  
The inflation parameter is chosen as $\alpha^2=0.0025$. 

\begin{figure*} 
\center
\includegraphics[width=1\textwidth]{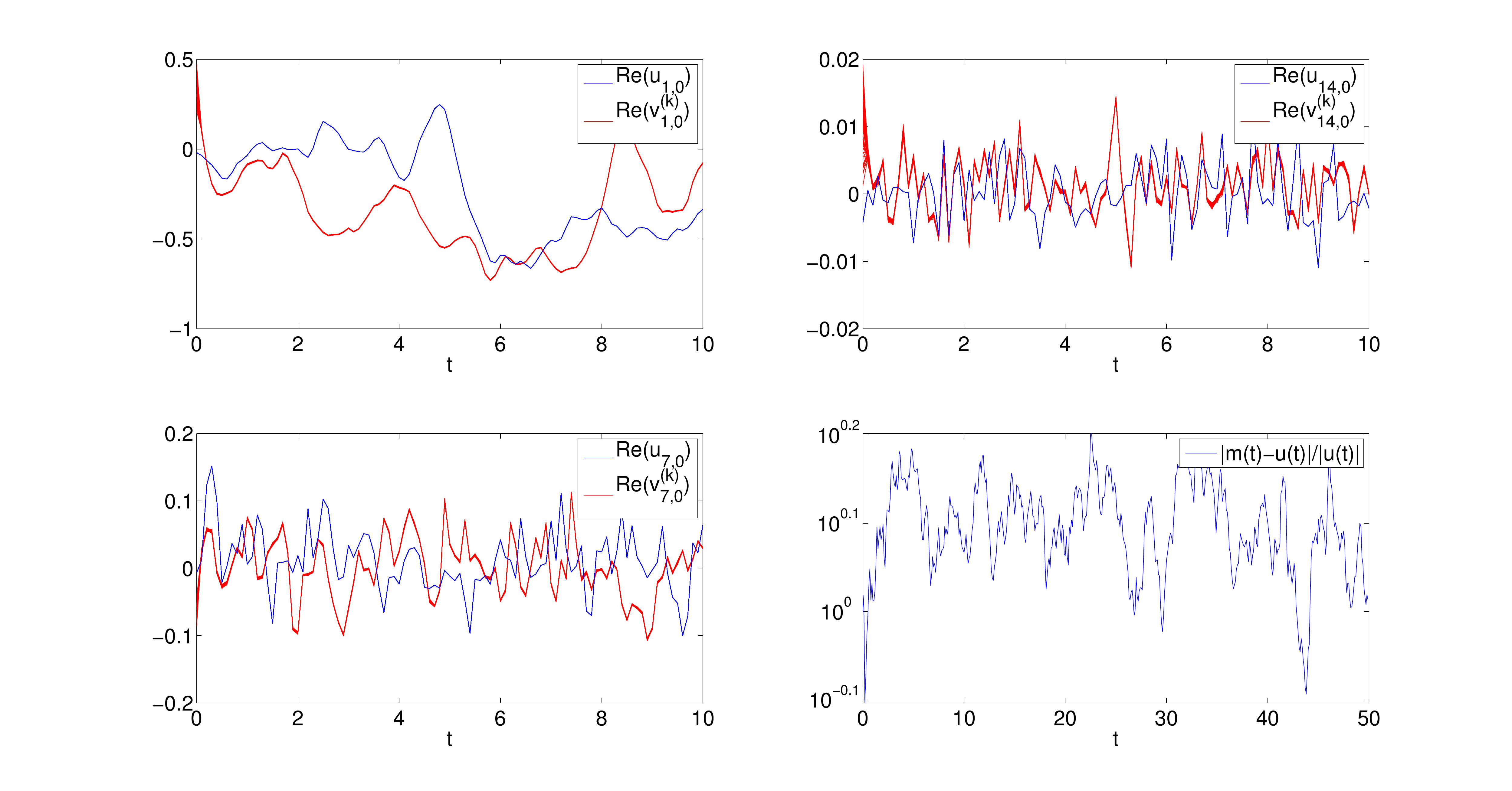}
\caption{Discrete-time observations, without inflation.
Trajectories of various modes  (at observation times) 
of the estimators $v^{(k)}$ 
and the signal $u$ are depicted above 
along with the relative error
in the $L^2$ norm, $|m-u|/|u|$, 
for $H=\Pl$, with $\lambda=\infty$.}
\label{fig:dfni}
\end{figure*}

\begin{figure*} 
\center
\includegraphics[width=1\textwidth]{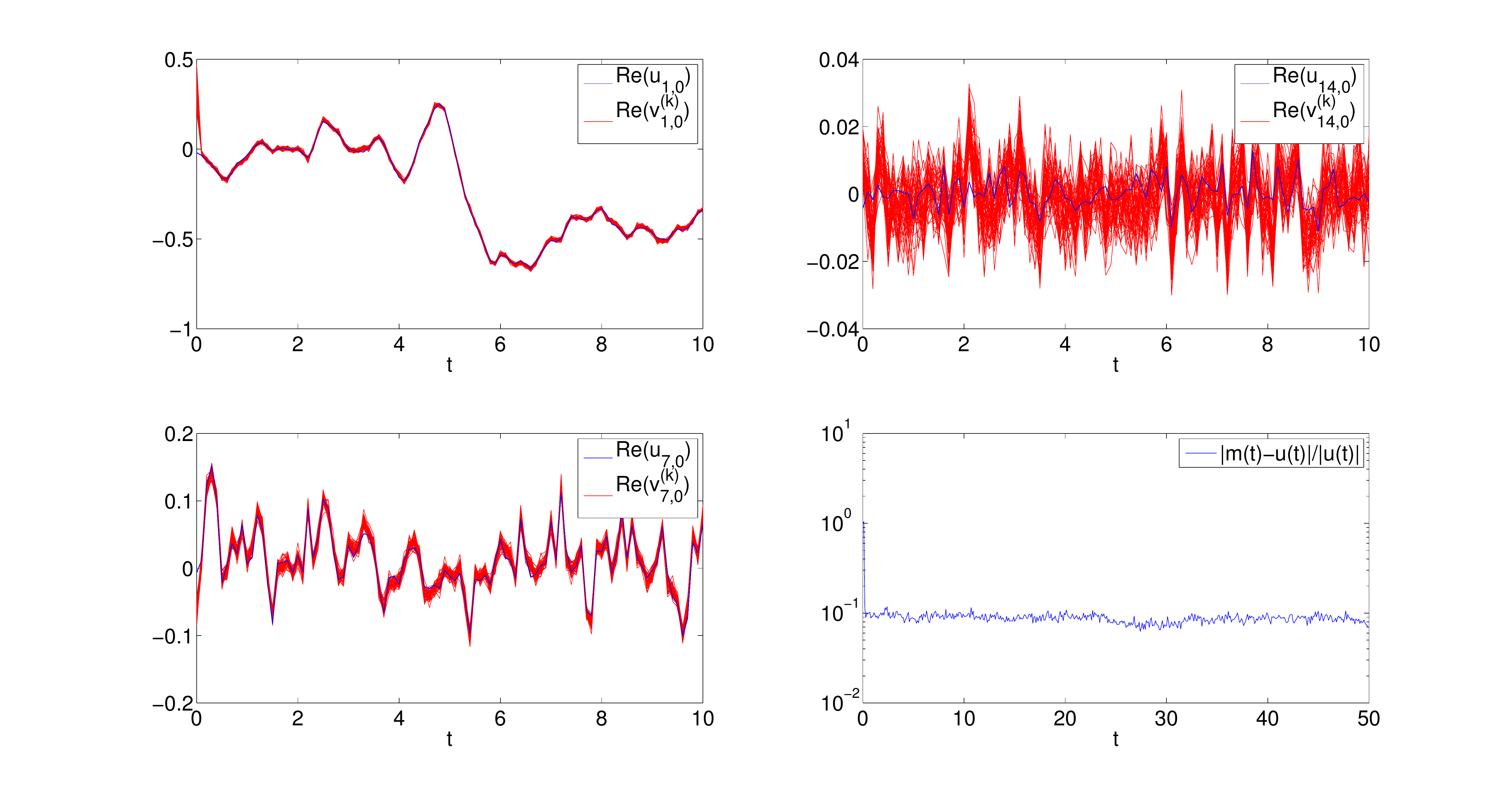}
\caption{Discrete-time observations, with inflation.
Trajectories of various modes (at observation times) 
of the estimators $v^{(k)}$ 
and the signal $u$ are depicted above 
along with the relative error
in the $L^2$ norm, $|m-u|/|u|$, 
for $H=\Pl$, with $\lambda=\infty$.}
\label{fig:dfi}
\end{figure*}

\subsubsection{Partial observations}
\label{sec:discpart}

In this section, the observations are again made every $J=20$ time-steps, 
but we will now consider observing only projections inside and outside a
ring of radius $|k_\lambda|=5$ in Fourier space.  In other words,
we consider two cases $H=\Pl$ and $H=\Ql$ where  
$\lambda=\pi^2|k_\lambda|^2$ and $|k_\lambda|=5$.
This is outside the validity of the theory which considers only full
observations. Inflation is used in both cases.
The inflation parameter is again chosen as $\alpha^2=0.0025$. 
Figure \ref{fig:dpini} shows that when observing all Fourier modes
inside a ring of radius $|k_\lambda|=5$ the filter is accurate
over long time-scales. In contrast, Figure \ref{fig:dpouti} shows
that observing all Fourier modes outside a ring of radius $|k_\lambda|=5$ 
does not provide enough information to induce accurate filtering.


\begin{figure*} 
\center
\includegraphics[width=1\textwidth]{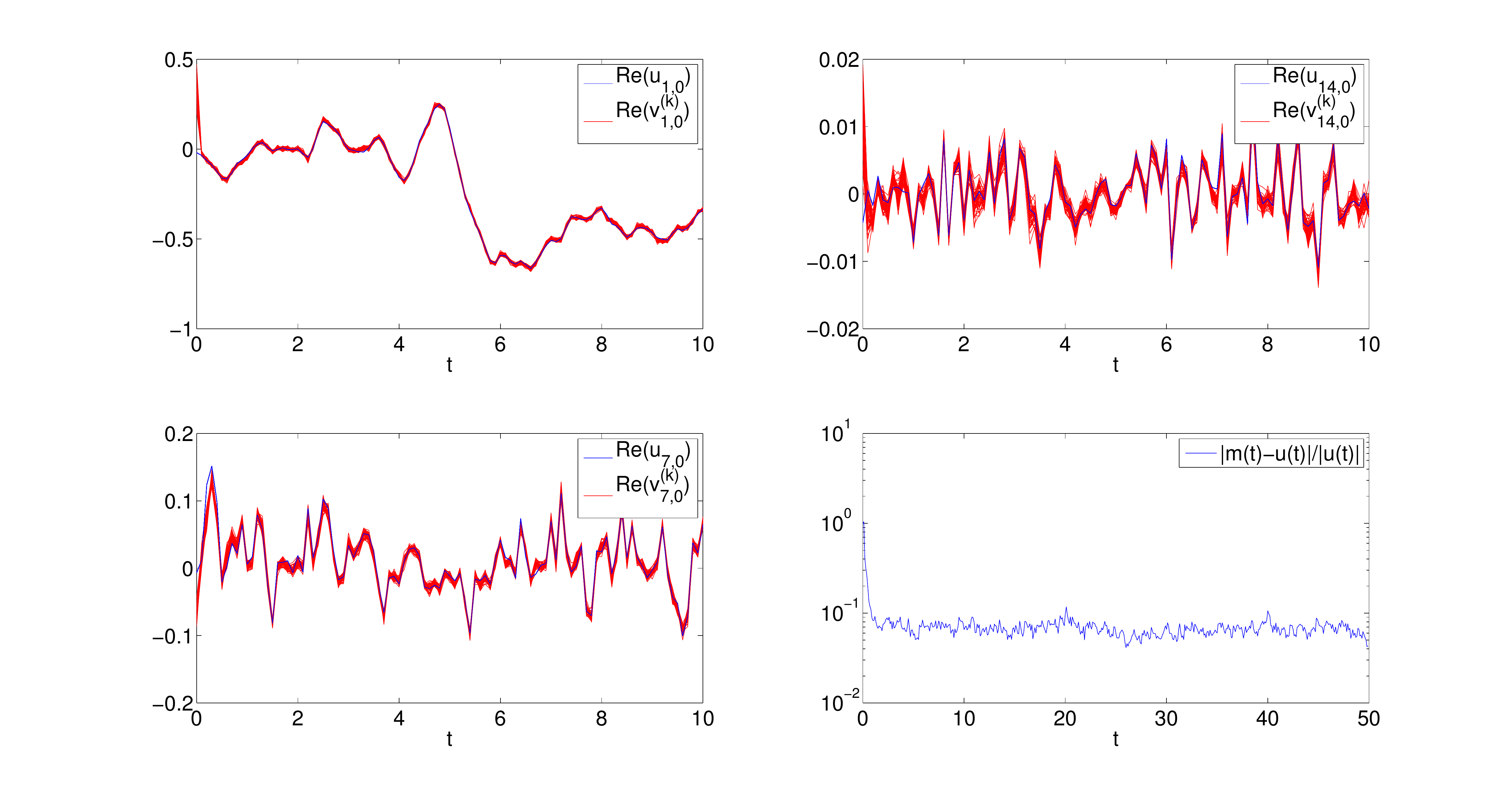}
\caption{Discrete-time observations, with inflation.
Trajectories of various modes  (at observation times) 
of the estimators $v^{(k)}$ 
and the signal $u$ are depicted above 
along with the relative error
in the $L^2$ norm, $|m-u|/|u|$, 
for $H=\Pl$, with $|k_\lambda|=5$, $J=20$, and 
$\gamma=10^{-2}$.}
\label{fig:dpini}
\end{figure*}

\begin{figure*} 
\center
\includegraphics[width=1\textwidth]{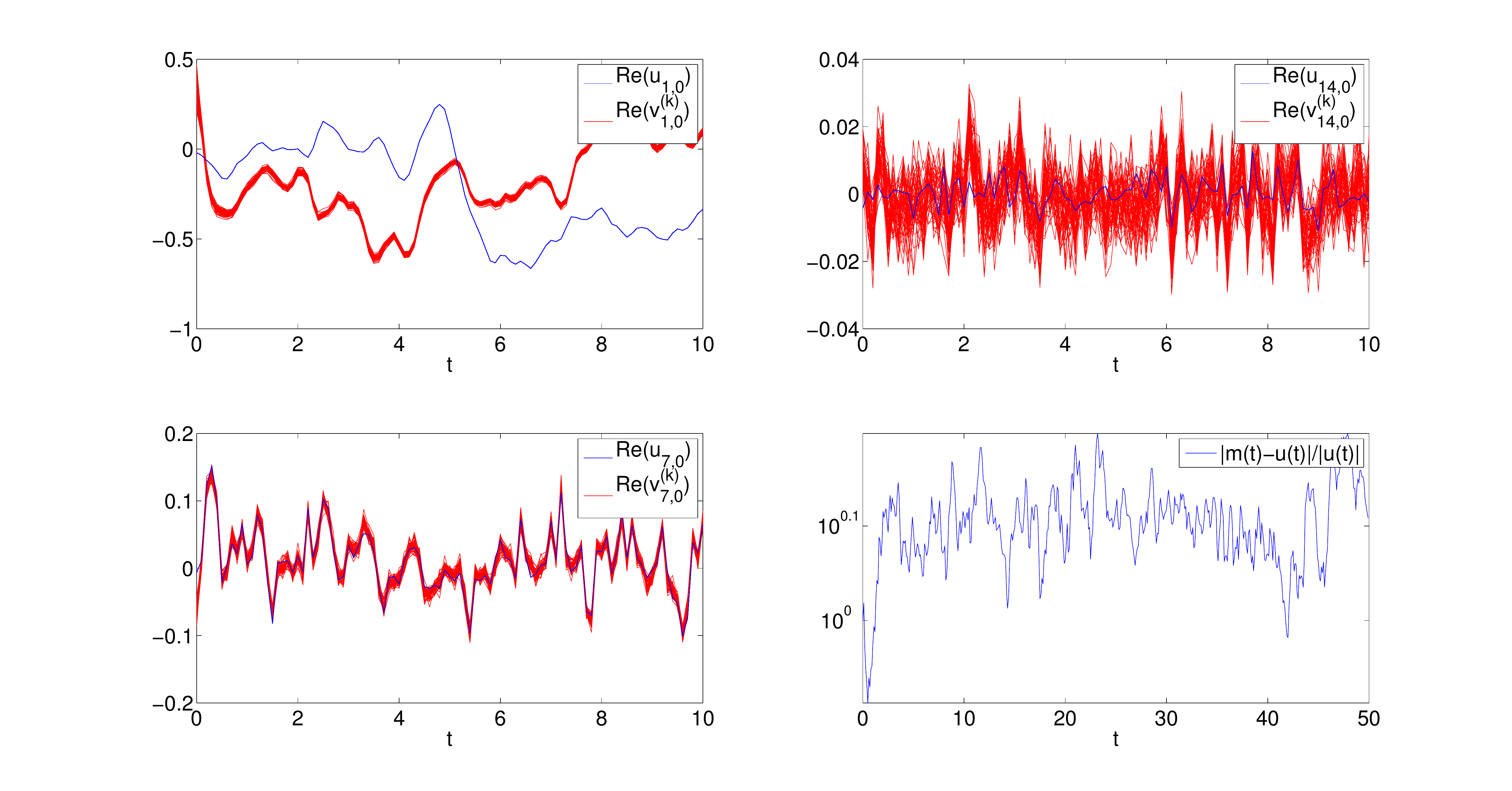}
\caption{Discrete-time observations, with inflation.
Trajectories of various modes  (at observation times) 
of the estimators $v^{(k)}$ 
and the signal $u$ are depicted above 
along with the relative error
in the $L^2$ norm, $|m-u|/|u|$, 
for $H=\Ql$, with $|k_\lambda|=5$, $J=20$, and 
$\gamma=10^{-2}$.}
\label{fig:dpouti}
\end{figure*}

\subsection{Continuous Time}
\label{sec:numcont}

In this section we study the SPDE \eqref{eq:need}, and its relation
to the underlying truth governed by \eqref{e:nse},
by means of numerical experiments. We thereby illustrate and extend
the results of section \ref{sec:enkfc}. 
We invoke a split-step scheme to solve equation \eqref{eq:need}, in which 
for each ensemble member $v^{(k)}$ one  
 step of  numerical integration of the Navier-Stokes
equation \eqref{e:nse} is composed  with one step of numerical integration 
 of the stochastic process
\begin{eqnarray}
d v^{(k)} + 
\frac{1}{\gamma^{2}}C(v)(v^{(k)}(t)-u(t))dt 
= \frac{1}{\gamma}C(v)(dW^{(k)}(t) + dB(t))\\ 
{v^{(k)}}(0)=v^{(k)}_0, \quad m = \frac{1}{K} \sum_{k=1}^K v^{(k)}_0
 \quad C(v) = \frac{1}{K} \sum_{k=1}^K (v^{(k)}_0-m)\otimes (v^{(k)}_0-m)
\label{eq:nse222}
\end{eqnarray}
at each step. The Navier-Stokes equation \ref{e:nse}
itself is solved by the method described in section \ref{sec:numset}.
The stochastic process is also diagonalized in
the Fourier basis \eqref{eq:fb} and then time-approximated by
the Euler-Maruyama scheme \cite{KlPl92}.
We consider the cases $H=\Pl$ with
$\lambda=\infty$ so
that all Fourier modes represented on the grid are observed, as well as
both $H=\Pl$ and $H=\Ql$ with $\lambda<\infty$.

\subsubsection{Full observations}
\label{sec:contfull}

Here we consider observations made at all numerically 
resolved, and hence observable, wavenumbers in the system;
hence $K=32^2$, not including padding in the spectral domain which
avoids aliasing.  So, effectively we approximate the case $H=\Pl$
where $\lambda=\infty$.
Figure \ref{fig:cfni} shows that, without inflation, the ensemble remains
bounded, but the mean is inaccurate, on the time-scales of interest.
In contrast Figure \ref{fig:cfi} demonstrates that inflation leads
to accurate reconstruction of the truth via the ensemble mean.
The inflation parameter is chosen as $\alpha^2=0.00025$. 

\begin{figure*} 
\center
\includegraphics[width=1\textwidth]{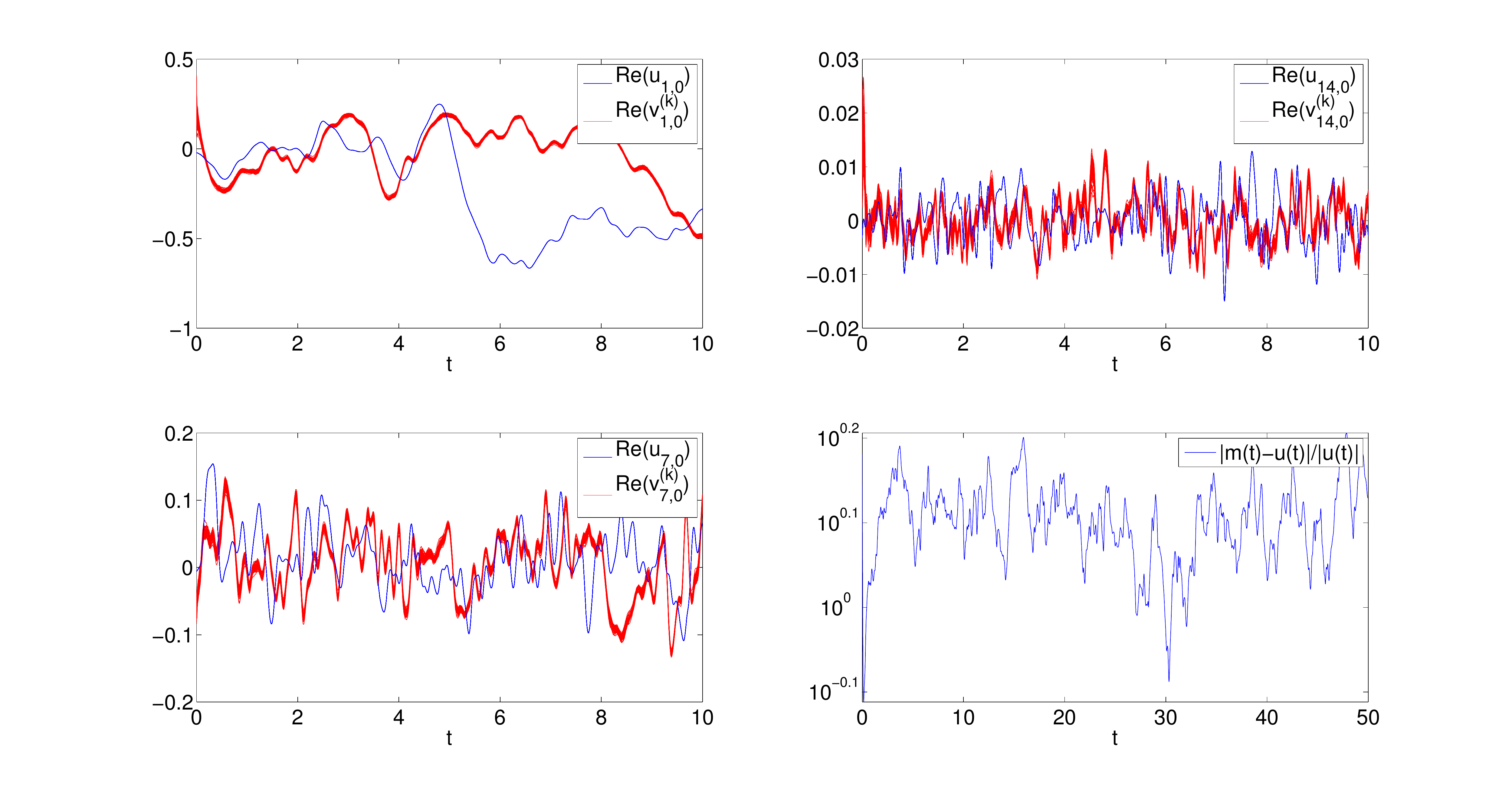}
\caption{Continuous-time observations, without inflation.
Trajectories of various modes of the estimators $v^{(k)}$ 
and the signal $u$ are depicted above 
along with the relative error
in the $L^2$ norm, $|v^{(1)}-u|/|u|$, 
for $H=\Pl$, with $\lambda=\infty$.}
\label{fig:cfni}
\end{figure*}

\begin{figure*} 
\center
\includegraphics[width=1\textwidth]{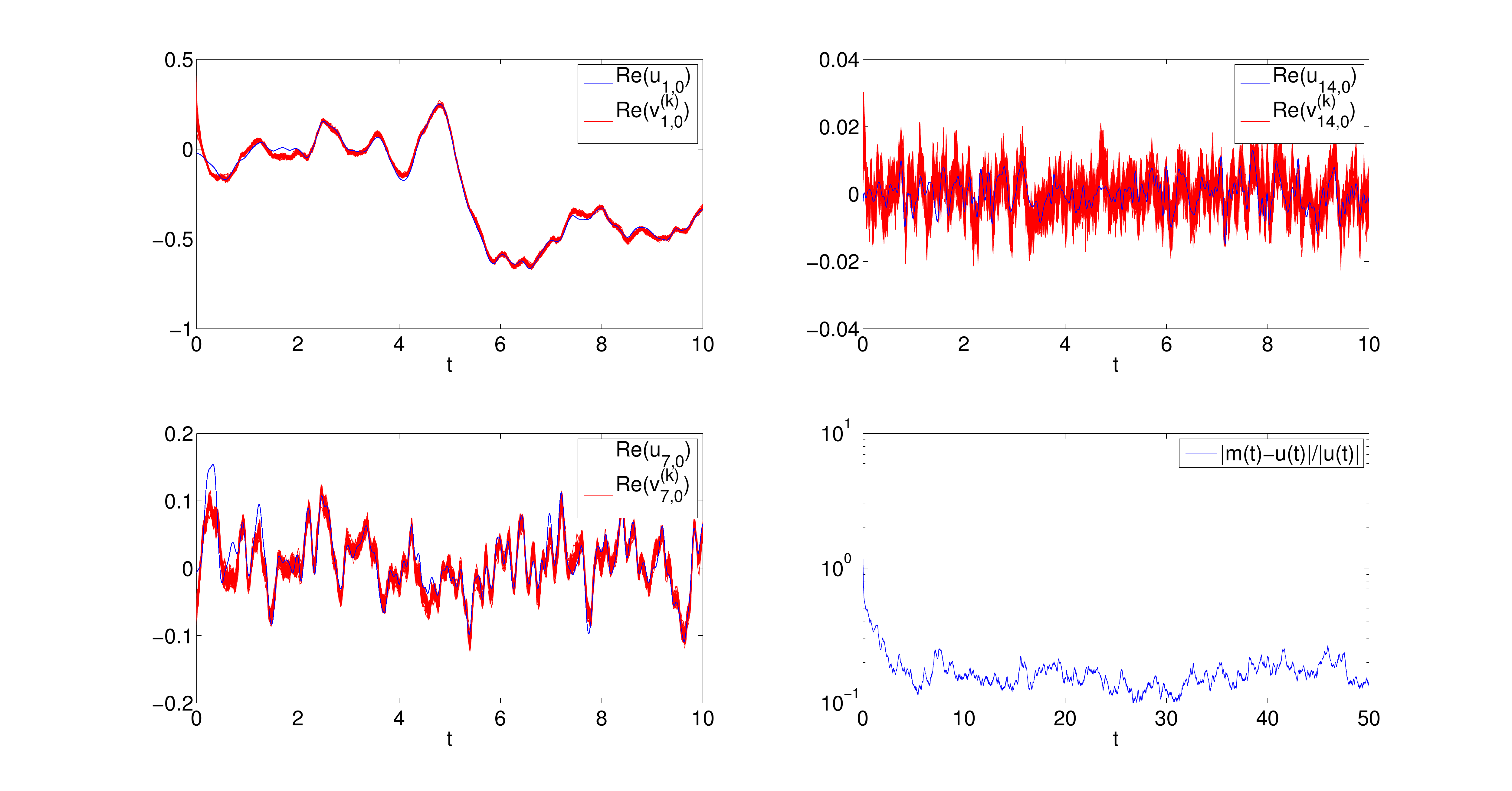}
\caption{Continuous-time observations, with inflation.
Trajectories of various modes of the estimators $v^{(k)}$ 
and the signal $u$ are depicted above 
along with the relative error
in the $L^2$ norm, $|v^{(1)}-u|/|u|$, 
for $H=\Pl$, with $\lambda=\infty$.}
\label{fig:cfi}
\end{figure*}

\subsubsection{Partial observations}
\label{sec:contpart}

Here we consider two cases again, as in section
\ref{sec:discpart} $H=\Pl$ and $H=\Ql$ where  
$\lambda=\pi^2|k_\lambda|^2$, with $|k_\lambda|=5$.
Inflation is used in both cases and the 
inflation parameter is again chosen as $\alpha^2=0.00025$. 
As for discrete time-observations we see that observing inside
a ring in Fourier space leads to filter accuracy (Figure
\ref{fig:cfp}) whilst observing outside yields only filter
boundedness (Figure \ref{fig:cfpc}.)

\begin{figure*} 
\center
\includegraphics[width=1\textwidth]{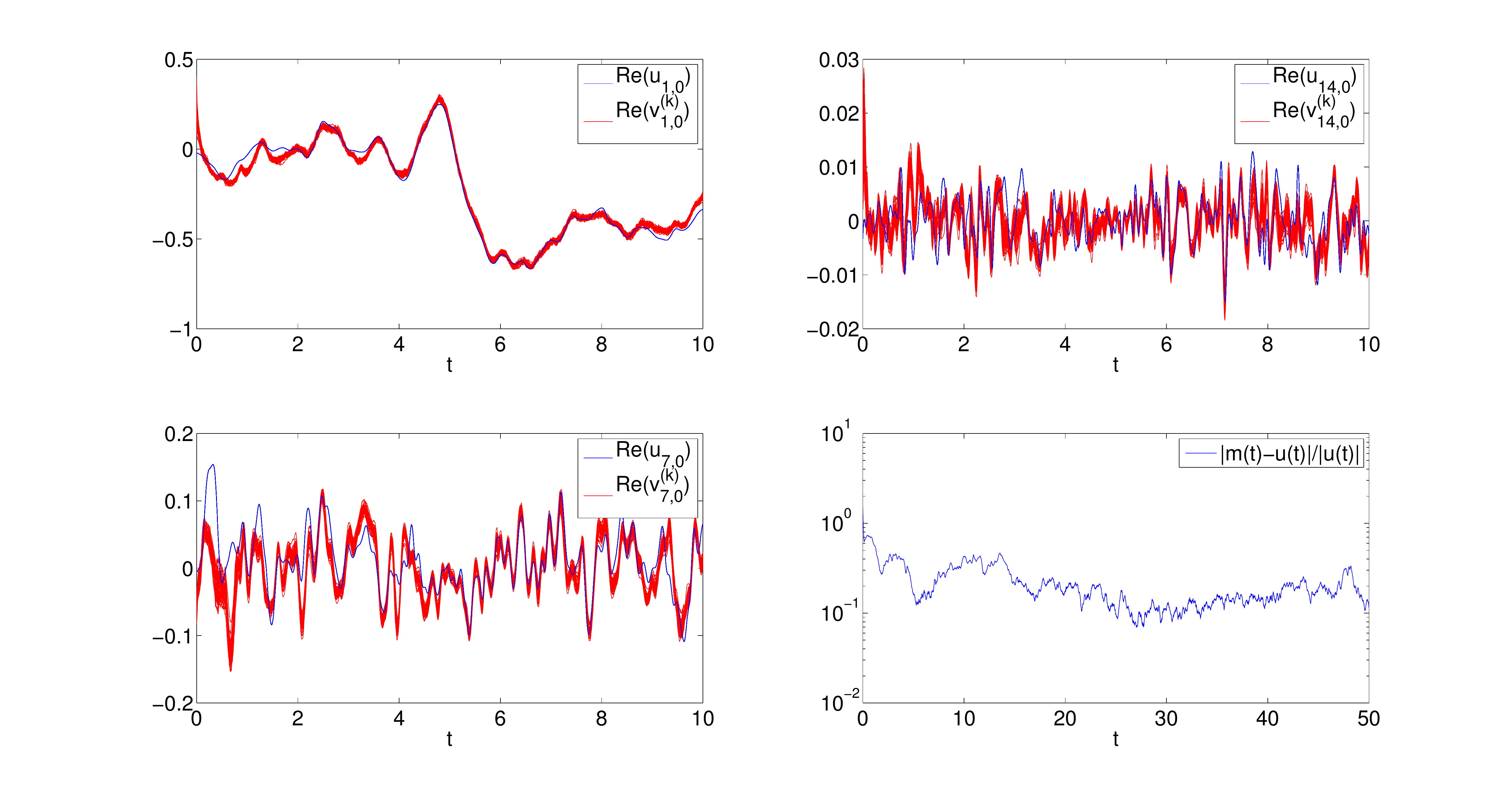}
\caption{Continuous-time observations, with inflation.
Trajectories of various modes of the estimators $v^{(k)}$ 
and the signal $u$ are depicted above 
along with the relative error
in the $L^2$ norm, $|v^{(1)}-u|/|u|$, 
for $H=\Pl$, with $|k_\lambda|=5$.}
\label{fig:cfp}
\end{figure*}

\begin{figure*} 
\center
\includegraphics[width=1\textwidth]{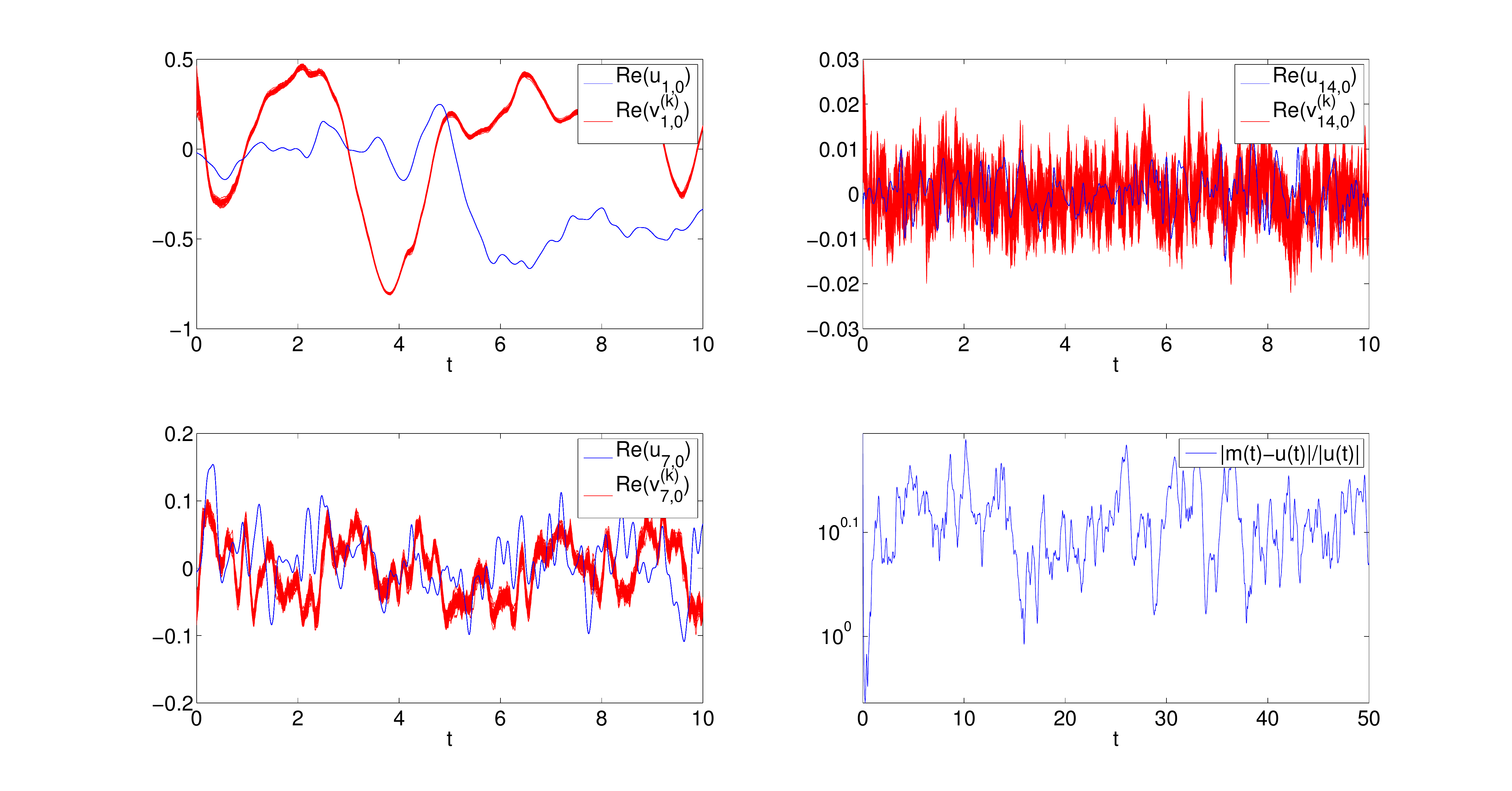}
\caption{Continuous-time observations,  with inflation.
Trajectories of various modes of the estimators $v^{(k)}$ 
and the signal $u$ are depicted above 
along with the relative error
in the $L^2$ norm, $|v^{(1)}-u|/|u|$, 
for $H=\Ql$, with $|k_\lambda|=5$.} 
\label{fig:cfpc}
\end{figure*}

\section{Conclusions}
\label{sec:conc}

We have developed a method for the analysis of the EnKF. Instead of viewing
it as an algorithm designed to accurately approximate the true filtering
distribution, which it cannot do, in general, outside Gaussian scenarios and in the
large ensemble limit, we study it as an algorithm for signal estimation
in the finite (possibly small) ensemble limit. We show well-posedness
of the filter and, when suitable variance inflation is used, mean-square
asymptotic accuracy in the large-time limit. These positive results
about the EnKF are encouraging and serve to underpin its perceived effectiveness
in applications. On the other hand it is important to highlight that
our analysis applies only to fully observed dynamics and interesting
open questions remain concerning the partially observed case.
In this regard it is important to note that the filter divergence
observed in \cite{MH08,gottwald2013} concerns partially observed models.
Thus more analysis remains to be done in this area. The tools introduced
herein may be useful in this regard. A second important direction in which
the analysis could usefully be extended is the class of models to which it
applies. We have studied dissipative quadratic dynamical systems with  
energy conserving nonlinearities. These are of direct relevance in
the atmospheric sciences \cite{kal03} but more general models will be
required for subsurface applications such as those arising in oil
reservoir simulation \cite{orl08}.  {The theoretical results have been confirmed
with numerical simulations of the Navier-Stokes equation on a torus.  These numerical results demonstrate two interesting potential
extensions of our theory: (i) to strengthen well-posedness to obtain 
boundedness of trajectories, at least in mean square; (ii) to extend 
well-posedness and accuracy results to certain partial observation 
scenarios.  Furthermore we highlight the fact that our results have assumed
exact solution of the underlying differential equation model;
understanding how filtering interacts with numerical approximations,
and potentially induces numerical instabilities, is a subject which 
requires further investigation; this issue is highlighted in
\cite{gottwald2013}.}

\vspace{0.1in}
\noindent{\bf Acknowledgements. The work of DK is supported by ONR.
The work of AMS is supported by ERC, EPSRC, ESA and ONR. The authors are
grateful to A.J. Majda for helpful discussions concerning this work.
KJHL is member of KAUST Strategic Research Initiative Center on 
Uncertainty Quantification in Computational Science and Engineering.}

\bibliographystyle{./Martin}
\bibliography{./enkf}

\end{document}